\date{}
\newcommand{\ud}{\mathrm{d}}
\newcommand{\bey}{\begin{eqnarray}}
\newcommand{\eey}{\end{eqnarray}}
\newcommand{\beq}{\begin{equation}}
\newcommand{\eeq}{\end{equation}}
\theoremstyle{plain}
\newtheorem{theorem}{\hspace{6mm}Theorem}[section]
\newtheorem{lemma}{\hspace{6mm}Lemma\,}[section]
\theoremstyle{definition}
\newtheorem{definition}{\hspace{6mm}Definition}[section]
\theoremstyle{remark}
\newtheorem{example}{\hspace{6mm}Example}[section]
\newtheorem{remark}{\hspace{6mm}Remark}[section]
\title{Novel superconvergence and ultraconvergence structures for the finite volume element method \thanks{The first and second authors are supported in part by the National Natural Science Foundation of China (No.12371396).}}
\author{Xiang Wang\thanks{School of Mathematics, Jilin University, Changchun 130012, China (wxjldx@jlu.edu.cn).}
\and Yuqing Zhang\thanks{School of Mathematics, Jilin University, Changchun 130012, China (zyq23@mails.jlu.edu.cn).}
\and Zhimin Zhang\thanks{Department of Mathematics, Wayne State University, Detroit, MI 48202, USA (ag7761@wayne.edu).}
}
\begin{document}
\maketitle

\begin{abstract}

This paper develops novel natural superconvergence and ultraconvergence structures for the bi-$k$-order finite volume element (FVE) method on
rectangular meshes. These structures furnish tunable and possibly asymmetric superconvergence and ultraconvergence points. We achieve
one-order-higher superconvergence for both derivatives and function values, and two-orders-higher ultraconvergence for derivatives--a
phenomenon that standard bi-$k$-order finite elements do not exhibit. Derivative ultraconvergence requires three conditions: a diagonal
diffusion tensor, zero convection coefficients, and the FVE scheme satisfying tensorial $k$-$k$-order orthogonality (imposed via dual mesh
constraints). This two-dimensional derivative ultraconvergence is not a trivial tensor-product extension of the one-dimensional phenomena; its
analysis is also considerably more complex due to directional coupling. Theoretically, we introduce the asymmetric-enabled M-decompositions
(AMD-Super and AMD-Ultra) to rigorously prove these phenomena. Numerical experiments confirm the theory.

\end{abstract}
\noindent{\textbf{ AMS 2020 Mathematics Subject Classification.} }
65N12, 65N08, 65N30

\noindent{\textbf{ Key Words.}}
ultraconvergence, superconvergence, finite volume element. 

\section{Introduction}
Superconvergence refers to the phenomenon where numerical solutions achieve higher convergence rates than those predicted by standard global error estimates.
Research in this area typically focuses on three categories: \textit{Natural superconvergence} refers to the numerical solution itself achieving higher convergence rates at specific points \cite{Babuska.2007,Cao.2013,Chen.1995,Lin.1996,Schatz.1996,Wang.2021b,Wang.2024,Zhu.1989}; \textit{Global superconvergence} describes the higher-order error estimate between the numerical solution and a superclose function in global norms \cite{Babuska.2007,Chou.2007,Krizek.1987,Wang.2021b}; and \textit{Post-processed superconvergence} denotes enhanced convergence rates achieved through recovery techniques \cite{Bramble.1977,Cockburn.2025,Lin.2013,Yang.2009,Zhang.2005,Zienkiewicz.1992}. As an effective way for improving numerical accuracy and efficiency, superconvergence has been extensively studied in the contest of the finite element method (FEM) \cite{Hu.2021,Lin.2008,Thomee.1977,Wahlbin.1995,Zhang.2012} and the finite volume element method (FVEM) \cite{Chen.2010,Chen.2015,Chou.2007,Li.2000,Li.2021,Sheng.2022}, among others.

The finite volume element method is valued for its local conservation properties and for the flexibility afforded by
 distinct trial and test spaces \cite{Cai.1991,Ewing.2002,Hong.2018,Huang.1998,Lv.2012b,Suli.1991,Wang.2016,Xu.2009,Zhang.2015}. This flexibility enables FVE schemes to achieve enhanced stability in convection-dominated problems \cite{Zhang.2023}, to provide optimal $L^2$ convergence rates using various dual mesh strategies \cite{Lin.2015,Lv.2012,Zhang.2023}, and to support tunable superconvergence structures \cite{Wang.2021b,Wang.2024}. Most existing
 natural \textit{superconvergence} results--narrowly defined as one-order-higher accuracy--for FVEM and FEM on rectangular meshes are restricted to pure diffusion problems, with superconvergence points being symmetric \cite{Cao.2015,Lv.2012b,Meng.2023}. \textit{Ultraconvergence} refers to superconvergence yielding at least two extra orders of accuracy \cite{Cao.2013,ChenC.2013,Douglas.1974,He.2016,Wang.2017,Wang.2024,Zhang.1996}.
Studies in FEM have identified natural function-value ultraconvergence at element vertices \cite{ChenC.2013,Douglas.1974} and post-processed ultraconvergence \cite{He.2016,Zhang.1996}. Notably, standard FEM exhibits no natural ultraconvergence for derivatives. For FVEM, natural derivative ultraconvergence has been established only in one dimension \cite{Cao.2013,Wang.2024}; on rectangular meshes it remains completely unexplored.

This paper develops novel natural superconvergence and ultraconvergence structures for bi-$k$-order FVE schemes on rectangular meshes applied to following general diffusion-convection-reaction problems.
\begin{align}\label{eq:BVP}
\left\{
\begin{array}{rl}
-\nabla\cdot(\mathbb{D}\nabla u) + \mathbb{Q}\cdot\nabla u + \boldsymbol{r} u = f, & \text{ in }\; \Omega:= (0,1)^2,\\
u = 0, & \text{ on }\; \partial \Omega,
\end{array}
\right.
\end{align}
where the diffusion tensor $\mathbb{D} = (d_{ij})_{2 \times 2}$ is symmetric and uniformly positive definite, the convection vector $\mathbb{Q} = (q_1, q_2)^\top$ and reaction coefficient $\boldsymbol{r}$ satisfy $\boldsymbol{r} - \frac{1}{2} \nabla \cdot \mathbb{Q} \geq \kappa > 0$ for some constant $\kappa$.

The novel results include $(k+1)$-order derivative superconvergence and $(k+2)$-order function-value superconvergence for full diffusion-convection-reaction equations, as well as $(k+2)$-order derivative ultraconvergence for diffusion-reaction equations with a diagonal diffusion tensor. Notably, this natural derivative ultraconvergence property is unattainable with traditional bi-$k$-order finite element methods. Furthermore, in contrast to the one-dimensional case, the FVE scheme on rectangular meshes does not admit derivative ultraconvergence beyond order $(k+2)$. The aforementioned phenomena require the FVE scheme to satisfy tensorial orthogonality conditions of sufficient order, which impose independent constraints on the dual strategies in the $x$- and $y$-directions. It is worth mentioning that a wide family of FVE schemes satisfies these tensorial orthogonality conditions, making the novel superconvergence and ultraconvergence points both tunable and potentially asymmetric.

Within each rectangular element $K$, the locations of these points are characterized as follows: (i) Derivative superconvergence points (say, for the $x$-partial derivative) lie on $k$ lines parallel to the $y$-axis, defined by the $k$ dual points in the $x$-direction; (ii) Function-value superconvergence points are the tensor product of the one-dimensional function-value superconvergence points in both directions, totaling $(k+1)^2$ points per element; (iii) Derivative ultraconvergence points (again for the $x$-derivative) are the tensor product of the $k$ dual points in the $x$-direction and the $(k+1)$ one-dimensional function-value superconvergence points in the $y$-direction, yielding $k(k+1)$ points per element.

Rigorous proofs rest on the proposed asymmetric-enabled M-decompositions (AMD-Super and AMD-Ultra). The analysis proceeds in two stages. First, we construct superclose functions $u_{I,Super}$ and $u_{I,Ultra}$ via AMD techniques. These superclose functions serve as pivotal bridges between the exact solution $u$ and the numerical solution $u_h$, yielding the desired natural super- and ultraconvergence at possibly asymmetric points.
Second, under the relevant tensorial orthogonality conditions we establish the global super- and ultraconvergence error bounds
$\|u_h - u_{I,Super}\|_{1} = \mathcal{O}(h^{k+1})$, $\|u_h - u_{I,Super}\|_{0} = \mathcal{O}(h^{k+2})$, and $\|u_h - u_{I,Ultra}\|_{1} = \mathcal{O}(h^{k+2})$.

The remainder of this paper is organized as follows. Section~\ref{sec:FVE Schemes} introduces the construction of bi-$k$-order FVE schemes and defines four interpolation operators for subsequent proofs; Section~\ref{sec:OC_AMD} establishes the tensorial $k$-$r$-order orthogonality condition and the asymmetric-enabled M-decompositions, which form the theoretical foundation; Section~\ref{sec:Preparation_estimation} derives the inf-sup condition for the FVE bilinear form and the difference estimate between FEM and FVE bilinear forms; derivative ultraconvergence is rigorously proven in Section~\ref{sec:Ultraconvergence}, while Section~\ref{sec:Superconvergence} provides detailed proofs of both derivative and function-value superconvergence; numerical validation of all theoretical results is presented in Section~\ref{sec:Numerical experiments}, with concluding remarks in Section~\ref{sec:Conclusion}.

Throughout this work, we employ the notation $A \lesssim B$ to indicate $A \leq B$ ignoring some positive constant independent of $A$ and $B$.

\section{FVE schemes}
\label{sec:FVE Schemes}
The FVE method involves both the primary mesh and the dual mesh, with corresponding trial and test function spaces defined accordingly. Here, we present the formulation of the FVE method on rectangular meshes considered in this paper.

\textbf{Primary mesh and trial function space.} Denote $\mathbb{Z}_{N}:=\{1,\dots,N\}$ and $\mathbb{Z}^{0}_{N}:=\{0\}\cup\mathbb{Z}_{N}$. Let $x_i$ ($i\in \mathbb{Z}_{N_x}^{0}$) and $y_j$ ($j\in \mathbb{Z}_{N_y}^{0}$) be the dividing points of $\Omega$ in each direction satisfying
\begin{align*}
0& = x_0 < x_1 < x_2 < \dots < x_{N_x} = 1,\\
0& = y_0 < y_1 < y_2 < \dots < y_{N_y} = 1.
\end{align*}
Then, we obtain the primary mesh (rectangular partition) $\mathcal{T}_{h}$ of $\Omega$ that
\begin{align*}
\mathcal{T}_{h} := \{K:\, K = [x_{i-1},\,x_{i}]\times[y_{j-1},y_{j}], \,\, i\in\mathbb{Z}_{N_x}, j\in\mathbb{Z}_{N_y} \},
\end{align*}
Suppose the primary mesh is quasi-uniform, i.e., there exists a positive constant $c_1$ such that
\[
h/h_{i}^x\leq c_1,\,\,h/h_{j}^y\leq c_1,  \quad \forall i\in\mathbb{Z}_{N_x},\,\, j\in\mathbb{Z}_{N_y},
\]
where, $h_{i}^x = x_{i}- x_{i-1}$, $h_{j}^y = y_{j}- y_{j-1}$, and $h = \max\limits_{i\in\mathbb{Z}_{N_x}, j\in\mathbb{Z}_{N_y}}\{h_{i}^x,\,h_{j}^y\}$ is the mesh size.

The trial function space is chosen as the bi-$k$-order ($k\geq1$) Lagrange finite element space
\begin{align*}
\mathit{U}_{h}^{k} := \{ w_{h}\in \mathit{C}(\Omega): \, w_{h}\vert_{K} \in Q^{k}(K),\, \forall K\in \mathcal{T}_{h} \quad\!\!\! \mathrm{and}\quad\!\!\! w_{h}|_{\partial \Omega}=0\}.
\end{align*}
Here, $Q^{k}(K)$ is the bi-$k$-order polynomial space on $K$.

\textbf{Dual mesh and test function space.} Let $\alpha^x_{s}$ ($s\in \mathbb{Z}_{k}$) and $\alpha^y_{t}$ ($t\in \mathbb{Z}_{k}$) be the dividing points of the reference element $\hat{K}:=[-1,1]^{2}$ in each direction satisfying
\begin{align*}   
\displaystyle
\left\{
\begin{array}{l}
 -1 < \alpha^x_{1} < \alpha^x_{2} < \cdots < \alpha^x_{k} < 1,\\
 -1 < \alpha^y_{1} < \alpha^y_{2} < \cdots < \alpha^y_{k} < 1.
\end{array}
\right.
\end{align*}
Denote $\boldsymbol{\alpha}^{x}:=(\alpha^x_{1},\alpha^x_{2},\cdots,\alpha^x_{k})$ and $\boldsymbol{\alpha}^{y}:=(\alpha^y_{1},\alpha^y_{2}, \cdots,\alpha^y_{k})$ the dual strategies in $x$-direction and $y$-direction respectively. The notation $\boldsymbol{\hat{\alpha}_{s,t}} = (\alpha^x_{s},\alpha^y_{t})$ $(s,t\in\mathbb{Z}_{k})$ is a dual point on $\hat{K}$. Then, given a primary element $K\in\mathcal{T}_{h}$, the dual points are defined by
\begin{align}\label{eq:Dual points}
\boldsymbol{\alpha_{s,t}^{K}} := (\alpha^{x}_{s,K},\alpha^{y}_{t,K}) = F_{K}(\boldsymbol{\hat{\alpha}_{s,t}}),
   \quad \forall s,t\in\mathbb{Z}_{k}.
\end{align}
Here, the from $\hat{K}$ to $K$ mapping $F_{K}:(\hat{x},\hat{y})\to(x,y)$ is given by
\begin{align}\label{eq:FK}
\left\{
  \begin{array}{l}
   \displaystyle x =\frac{x_{i}-x_{i-1}}{2}\hat{x}+\frac{x_{i}+x_{i-1}}{2},  \quad \forall \hat{x}\in[-1,1], \\
   \displaystyle y =\frac{y_{j}-y_{j-1}}{2}\hat{y}+\frac{y_{j}+y_{j-1}}{2},  \quad \forall \hat{y}\in[-1,1]. \\
  \end{array}
\right.
\end{align}
Thus, the dual mesh $\mathcal{T}_{h}^{*}$ of $\Omega$ is defined accordingly.

The test function space $V_{h}$ is the piecewise constant function space based on $\mathcal{T}_{h}^{*}$ that
\begin{align*}
V_{h} := \{v_h\in L^{2}(\Omega) : \,v_h=\sum_{K^{*}\in\mathcal{T}_{h}^{*}} v_{K^*}\psi_{K^*},\,\,
                                     \mathrm{and} \,\, v_{h}\!\!\mid_{\partial \Omega}=0\},
\end{align*}
where $v_{K^*}$ and $\psi_{K^*}=\chi(K^*)$ are the constant and the characteristic function on $K^{*}$, respectively.

\textbf{FVE schemes.}
The FVE scheme for solving (\ref{eq:BVP}) is to find $u_h \in U_{h}^{k}$, such that
\begin{align}\label{eq:FVEscheme}
a_h (u_h,v_h) = (f,v_h), \quad \forall v_h\in V_h,
\end{align}
where $(\cdot,\cdot)$ is the inner product, and $a_{h}(\cdot,\cdot)$ is the bilinear form of the FVE scheme that
\begin{align}\label{eq:ah_K}
a_{h}(u_{h},v_{h}) := \sum_{K \in\mathcal{T}_{h}}a^{K}_{h}(u_{h},v_{h}),
\end{align}
where
\begin{align*}
a^{K}_{h}(u_{h},v_{h}) = \sum_{K^{*}\in\mathcal{T}_{h}^{*}}
                           -\int_{\partial K^{*}\cap K}((\mathbb{D}\nabla u_{h})\cdot\boldsymbol{n})v_{h}\ud s
                     +   \int_{K^{*}\cap K}(\mathbb{Q}\cdot\nabla u_{h}+\boldsymbol{r} u_{h})v_{h}\ud x \ud y.
\end{align*}
Here, $\boldsymbol{n}$ denotes the unit outer normal vector to $\partial K^{*}$.

\subsection{Interpolation operators for analysis}
\label{subsubsec:Some notations}
For analysis purpose, we define the following four operators $\Pi_{h}^{k}$, $\Pi_{K}^{0}$, $\Pi_{K}^{1}$ and $\Pi_{h}^{k,*}$ $(K\in \mathcal{T}_{h})$.

$\bullet$ $\Pi_{h}^{k}: H_{0}^{1}(\Omega)\rightarrow U_{h}^{k}$, the piecewise bi-$k$-order Lagrange interpolation operator based on $\mathcal{T}_{h}$.

$\bullet$ $\Pi_{K}^{0}: L^{2}(K)\rightarrow P^{0}(K)$, the average operator on $K\in\mathcal{T}_{h}$. For any $w\in L^{2}(K)$, $\bar{w}_{K}:=\Pi_{K}^{0}w$ is the average of $w$ on $K$.

$\bullet$ $\Pi_{K}^{1}: L^{2}(K)\rightarrow P^{1}(K)$, a linear interpolation operator on $K\in\mathcal{T}_{h}$. Let $\displaystyle\xi  = \frac{x-x_{i-1}}{x_{i}-x_{i-1}}$ and $\displaystyle\eta = \frac{y-y_{j-1}}{y_{j}-y_{j-1}}$. For $w_{h} \in U_{h}^{k}$, we define
\begin{align}\label{eq:w1_def}
\Pi_{K}^{1}w_{h} =&  w_{h}(x_{i-1},y_{j-1})+\Big(w_{h}(x_{i-1},y_{j})-w_{h}(x_{i-1},y_{j-1})\Big)\eta   \nonumber\\
                  &                           +\Big(w_{h}(x_{i},y_{j-1})-w_{h}(x_{i-1},y_{j-1})\Big)\xi.
\end{align}
From this definition, one immediately has
\begin{align}
\label{eq:w1_pointy}
\left\{
\begin{array}{l}
(\Pi_{K}^{1}w_{h})|^{(x_{i},y)}_{(x_{i-1},y)} = w_{h}(x_{i},y_{j-1})-w_{h}(x_{i-1},y_{j-1}),
\quad \forall y\in [y_{j-1},y_j],\\
(\Pi_{K}^{1}w_{h})|^{(x,y_{j})}_{(x,y_{j-1})} = w_{h}(x_{i-1},y_{j})-w_{h}(x_{i-1},y_{j-1}),
\quad \forall x\in [x_{i-1},x_i],
\end{array}
\right.
\end{align}
and
\begin{align}\label{eq:w1_norm}
            \|w_{h}-\Pi_{K}^{1}w_{h}\|_{0,K}
\lesssim &  \,h\,|w_{h}|_{1,K}.
\end{align}

$\bullet$ $\Pi_{h}^{k,*}: L^{2}(\Omega)\rightarrow V_{h}$, the piecewise constant operator based on the dual mesh $\mathcal{T}_{h}^{*}$. Denote $a^x_{s}$, $a^y_{t}$ ($s,t\in\mathbb{Z}_{k}^{0}$) the interpolation parameters on the reference element $\hat{K}$ that
\begin{align*}
&-1=a^x_{0}<a^x_{1}<\cdots<a^x_{k}=1,\\
&-1=a^y_{0}<a^y_{1}<\cdots<a^y_{k}=1,
\end{align*}
which locate the interpolation points $\boldsymbol{\hat{a}_{s,t}} = (a^x_{s},a^y_{t})$ on $\hat{K}$, and the interpolation points on $K\in\mathcal{T}_{h}$
\begin{align}\label{eq:Interpolation nodes}
\boldsymbol{a_{s,t}^{K}} := (a^{x}_{s,K},a^{y}_{t,K}) = F_{K}(\boldsymbol{\hat{a}_{s,t}}),
   \quad \forall s,t \in \mathbb{Z}^{0}_{k}.
\end{align}
Then, each dual element $K_{\boldsymbol{a}}^*\in\mathcal{T}_{h}^{*}$ surrounds one $\boldsymbol{a}:=(a^x,a^y)$ correspondingly. Thus, 
\begin{align*}
(\Pi_{h}^{k,*}w)|_{K_{\boldsymbol{a}}^{*}} = w(\boldsymbol{a}),\quad \forall w\in L^{2}(\Omega), \,\,\forall K_{\boldsymbol{a}}^{*}\in \mathcal{T}_{h}^{*}.
\end{align*}

The operator $\Pi^{k,*}_h$ can be defined through composition. Specifically, for any $w\in L^{2}(\Omega)$,
\begin{align*}
\Pi^{k,*}_{h}w=\Pi^{k,*}_{h,x}(\Pi^{k,*}_{h,y}w)=\Pi^{k,*}_{h,y}(\Pi^{k,*}_{h,x}w),
\end{align*}
where the component operators $\Pi^{k,*}_{h,x}$ and $\Pi^{k,*}_{h,y}$ are defined as
\begin{align*}
(\Pi^{k,*}_{h,x}w)|_{K_{\boldsymbol{a}}^{*}}=w(a^x,y) \quad {\mathrm{and}} \quad
(\Pi^{k,*}_{h,y}w)|_{K_{\boldsymbol{a}}^{*}}=w(x,a^y).
\end{align*}

\begin{remark}
The trial-to-test interpolation operator $\Pi_{h}^{k,*}$ is defined by two aspects: the dual mesh (dual parameters) and interpolation point positions (interpolation parameters). We emphasize that $\Pi_{h}^{k,*}$ is purely theoretical, so the interpolation parameters have no impact on FVE schemes.
\end{remark}
\begin{remark}
The properties (\ref{eq:w1_pointy}) of the interpolation operator $\Pi_{K}^{1}$ ensure that $\Pi_{h}^{k,*}(\Pi_{K}^{1}w_{h}(x_{i},y)    - \Pi_{K}^{1}w_{h}(x_{i-1},y))$ remains a constant. This property plays an important role in the subsequent analysis (\ref{eq:E11_U}) of Theorem~\ref{thm:Ultra-super_H1 2D}.
\end{remark}

\section{Orthogonality condition and asymmetric-enabled M-decompositions}
\label{sec:OC_AMD}
This section provides the definitions and properties of the tensorial orthogonality condition and the asymmetric-enabled M-decompositions. The superconvergence and ultraconvergence properties discussed in this paper are fundamentally determined by the order of the tensorial orthogonality condition satisfied by the FVE scheme. While, the asymmetric-enabled M-decompositions provides superclose functions $u_{I,Super}$ and $u_{I,Ultra}$, which bridge the FVEM solutions with the exact solution at superconvergence points and ultraconvergence points.

\subsection{Tensorial orthogonality condition}
\label{subsubsec:The tensorial orthogonality condition}
First, we recall the 1D $k$-$r$-order ($k-1 \leq r \leq 2k-2$) orthogonality condition \cite{Wang.2024}.
\begin{definition}[One-dimensional $k$-$r$-order orthogonality condition \cite{Wang.2024}]
\label{def:orthogonality_condition 1D}
A trial-to-test operator $\Pi_{h,x}^{k,*}$ satisfies the $k$-$r$-order orthogonality condition if
\begin{align}\label{eq:orthogonality_condition 1D}
\int_{-1}^1 g\,(w-\Pi_{h,x}^{k,*}w)\ud x=0,\quad \forall g \in P^r([-1,1]),\,\forall w \in P^1([-1,1]).
\end{align}
\end{definition}

\begin{remark}
The 1D orthogonality condition $(\ref{eq:orthogonality_condition 1D})$ imposes constraints on the dual parameters $\alpha_s^{x}$ ($s\in\mathbb{Z}_{k}$) through equations:
\begin{align}\label{eq:OCequiv}
\sum\limits_{s=1}^{k}(a_{s}^{x}-a_{s-1}^{x})(\alpha_{s}^{x})^{i+1}
=\frac{1}{i+2}\Big(1-(-1)^i\Big),\quad \forall i\in\{0,1,\dots,r\}.
\end{align}
Notably, the interpolation parameters $a_s^{x}$ ($s\in\mathbb{Z}_{k}\setminus\{k\}$) remain as free parameters that do not affect the FVE scheme or dual mesh construction.
\end{remark}

The tensorial extension to 2D is defined through operator composition.

\begin{definition}[Tensorial $k$-$r$-order orthogonality condition]
\label{def:orthogonality_condition_2D}
A trial-to-test operator $\Pi_{h}^{k,*}$ satisfies the tensorial $k$-$r$-order orthogonality condition if it admits a decomposition
\[
\Pi_h^{k,*} = \Pi_{h,x}^{k,*}  \Pi_{h,y}^{k,*},
\]
and the operators $\Pi_{h,x}^{k,*}$ and $\Pi_{h,y}^{k,*}$ satisfy the $k$-$r$-order orthogonality condition (\ref{eq:orthogonality_condition 1D}) in $x$-direction and $y$-direction, respectively.

A bi-$k$-order FVE scheme is called to satisfy the tensorial $k$-$r$-order orthogonality condition if there exists a trial-to-test operator $\Pi_{h}^{k,*}$ satisfying the above decomposition property.
\end{definition}

The tensorial $k$-$r$-order $(k-1 \le r \le 2k-2)$ orthogonality condition implies the properties in Lemma~\ref{lem:orth1}-\ref{lem:The equivalence of the orth condition}. In \cite{Wang.2024,Zhang.2023}, the case of $r = k-1$ is rigorously proved, and the results for $k \le r \le 2k-2$ follow by a similar argument.

\begin{lemma}
\label{lem:orth1}
Given a rectangular mesh $\mathcal{T}_h$, let $\Pi_h^{k,*}$ satisfy the tensorial $k$-$r$-order orthogonality condition. For any $K\in \mathcal{T}_h$, we have
\begin{subequations}
\begin{align}
\label{eq:orth_conditionx}
&\int_{K} \left( g_1(y)x^i \right) (x-\Pi_h^{k,*}x)\ud x \ud y=0,
    & \forall\, i\le r, \textrm{and polynomial $g_1(y)$ of $y$,}\\
\label{eq:orth_conditiony}
&\int_{K} \left( g_2(x)y^i \right) (y-\Pi_h^{k,*}y)\ud x \ud y=0,
    & \forall\, i\le r, \textrm{and polynomial $g_2(x)$ of $x$,}\\
\label{eq:orth_condition1}
&\int_{K} g_3\,(w-\Pi_h^{k,*}w)\ud x \ud y=0, &\forall\, g_3\in Q^{r}(K),\,  \forall w\in P^1(K),\\
\label{eq:orth_condition2}
&\int_{S_j} g_4\,(w-\Pi_h^{k,*}w)\ud s=0, &\forall\, g_4\in P^{r}(S_j),\,  \forall w\in P^1(S_j),
\end{align}
\end{subequations}
where, $S_j\subset \partial K\,(j=1,\cdots,4)$ are the $4$ edges of $K$. Here, $Q^{r}(\cdot)$ and $P^{r}(\cdot)$ are the bi-$r$-order and $r$-order polynomial spaces respectively.
\end{lemma}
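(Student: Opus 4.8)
The plan is to reduce every one of the four identities (\ref{eq:orth_conditionx})--(\ref{eq:orth_condition2}) to the one-dimensional $k$-$r$-order orthogonality condition (\ref{eq:orthogonality_condition 1D}), exploiting the tensor-product structure $\Pi_h^{k,*}=\Pi_{h,x}^{k,*}\Pi_{h,y}^{k,*}$ together with the affine scaling (\ref{eq:FK}) from $\hat K$ to $K$. The starting observation is that, because the coordinate function $x$ does not depend on $y$ and the dual mesh is a tensor product, $\Pi_{h,y}^{k,*}x=x$, so $\Pi_h^{k,*}x=\Pi_{h,x}^{k,*}x$ is piecewise constant in $x$ and independent of $y$; symmetrically $\Pi_h^{k,*}y$ depends only on $y$. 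I will also use that $\Pi_h^{k,*}$, being a pointwise evaluation, reproduces constants.

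To prove (\ref{eq:orth_conditionx}) I would first note that, by the observation above, the integrand $g_1(y)\,x^i(x-\Pi_h^{k,*}x)$ splits into a pure $y$-factor times a pure $x$-factor, so Fubini gives
\[
\int_{K} g_1(y)x^i(x-\Pi_h^{k,*}x)\,\ud x\,\ud y
=\Big(\int_{y_{j-1}}^{y_j}g_1(y)\,\ud y\Big)\Big(\int_{x_{i-1}}^{x_i}x^i(x-\Pi_h^{k,*}x)\,\ud x\Big).
\]
It then suffices to show the second factor vanishes. Pulling this one-dimensional integral back to $[-1,1]$ through (\ref{eq:FK}), the constant shift in the map cancels in $x-\Pi_h^{k,*}x$, so that $x-\Pi_h^{k,*}x=\tfrac{h_i^x}{2}(\hat x-\Pi_{h,x}^{k,*}\hat x)$, while $x^i$ becomes a polynomial in $\hat x$ of degree $i\le r$. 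The one-dimensional condition (\ref{eq:orthogonality_condition 1D}), applied with $w=\hat x\in P^1([-1,1])$ and this degree-$i$ polynomial as the test function $g$, forces the factor to be zero. Identity (\ref{eq:orth_conditiony}) follows verbatim after interchanging the roles of $x$ and $y$ and using $\Pi_{h,y}^{k,*}$.

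Identity (\ref{eq:orth_condition1}) will then follow from (\ref{eq:orth_conditionx})--(\ref{eq:orth_conditiony}) by linearity. Writing $w=c_0+c_1x+c_2y\in P^1(K)$, the constant part drops out since $\Pi_h^{k,*}c_0=c_0$, leaving $w-\Pi_h^{k,*}w=c_1(x-\Pi_h^{k,*}x)+c_2(y-\Pi_h^{k,*}y)$. Expanding $g_3\in Q^r(K)$ into monomials $x^p y^q$ with $p,q\le r$ and pairing each monomial with $x-\Pi_h^{k,*}x$ (treating $y^q$ as the polynomial $g_1(y)$) or with $y-\Pi_h^{k,*}y$ (treating $x^p$ as $g_2(x)$), every resulting term vanishes by (\ref{eq:orth_conditionx}) or (\ref{eq:orth_conditiony}).

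The remaining identity (\ref{eq:orth_condition2}) is where the real care is needed, and I expect it to be the main obstacle, since it requires identifying the trace of the two-dimensional operator $\Pi_h^{k,*}$ on an edge of $K$. Consider for definiteness the bottom edge $y=y_{j-1}$, corresponding to $\hat y=-1=a^y_0$. Points of this edge lie in the bottom row of dual cells, those surrounding the nodes $(a^x_{s},a^y_0)$, so on the edge $\Pi_h^{k,*}w$ takes the value $w(a^x_{s},y_{j-1})$ on the $x$-subinterval between consecutive dual points. For $w\in P^1(S_j)$, a function of $x$ alone along this edge, this is exactly $\Pi_{h,x}^{k,*}(w|_{S_j})$; equivalently $\Pi_{h,y}^{k,*}$ acts as the identity on traces over $\hat y=-1$. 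Hence the boundary integral reduces, after scaling to $[-1,1]$, to $\tfrac{h_i^x}{2}\int_{-1}^1 g_4\,(w-\Pi_{h,x}^{k,*}w)\,\ud\hat x=0$ by (\ref{eq:orthogonality_condition 1D}), and the analogous computation on the vertical edges uses the $y$-direction condition. The delicate point throughout is the bookkeeping of the dual-cell/node correspondence at the boundary rows and columns, to guarantee that the relevant trace of $\Pi_h^{k,*}$ genuinely collapses to a one-dimensional operator for which (\ref{eq:orthogonality_condition 1D}) applies.
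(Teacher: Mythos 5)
Your proposal is correct. The paper itself gives no proof of this lemma: it only remarks that the case $r=k-1$ is proved in the cited references and that $k\le r\le 2k-2$ follows "by a similar argument," so there is no in-paper argument to compare against line by line. Your tensor-product reduction is exactly the argument one would expect those references to supply, and it is complete: the two load-bearing observations --- that $\Pi_{h,y}^{k,*}x=x$ and hence $\Pi_h^{k,*}x=\Pi_{h,x}^{k,*}x$ depends only on $x$ (so Fubini factors (\ref{eq:orth_conditionx}) into a $y$-integral times a one-dimensional $x$-integral handled by (\ref{eq:orthogonality_condition 1D})), and that on an edge such as $y=y_{j-1}$ the interpolation node $a^y_0=-1$ sits on the edge itself, so the trace of $\Pi_h^{k,*}$ collapses to the one-dimensional operator $\Pi_{h,x}^{k,*}$ over the correct dual subintervals $[\alpha^x_{s,K},\alpha^x_{s+1,K}]$ --- are both stated and justified. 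The deduction of (\ref{eq:orth_condition1}) from (\ref{eq:orth_conditionx})--(\ref{eq:orth_conditiony}) by writing $w-\Pi_h^{k,*}w=c_1(x-\Pi_h^{k,*}x)+c_2(y-\Pi_h^{k,*}y)$ and expanding $g_3$ in monomials is also sound, since constants are reproduced by pointwise evaluation. The only thing you might add for completeness is one sentence confirming that the affine map (\ref{eq:FK}) sends $x^i$ to an element of $P^{i}([-1,1])\subset P^{r}([-1,1])$ and that linearity plus reproduction of constants gives $x-\Pi_{h,x}^{k,*}x=\tfrac{h_i^x}{2}(\hat x-\Pi_{h,x}^{k,*}\hat x)$, but you already state both facts; no gap remains.
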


\begin{lemma}
\label{lem:The equivalence of the orth condition}
Let $\Pi_h^{k,*}$ satisfy the tensorial $k$-$r$-order orthogonality condition. Denote $A^{x}_i$, $A^{y}_j$ be the weights of the $k$-point numerical quadrature corresponding to the dual parameters $\alpha^x_{i}$ and $\alpha^y_{j}$ $(i,j\in\mathbb{Z}_{k})$, respectively. Then,
$$
\int_{-1}^{1} g(x) \, dx = \sum_{i=1}^{k} A^{x}_{i} g(\alpha^x_{i})  \text{  and  }  \int_{-1}^{1} g(y) \, dy = \sum_{j=1}^{k} A^{y}_{j} g(\alpha^y_{j}),\quad  \forall g \in P^{r+1}([-1,1]).
$$
\end{lemma}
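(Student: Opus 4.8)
The plan is to collapse the statement to one coordinate direction and then recast the integral orthogonality condition as a degree-of-exactness statement for a point-evaluation quadrature rule. By Definition~\ref{def:orthogonality_condition_2D}, $\Pi_h^{k,*}=\Pi_{h,x}^{k,*}\Pi_{h,y}^{k,*}$, and each factor satisfies the one-dimensional $k$-$r$-order condition~(\ref{eq:orthogonality_condition 1D}) in its own variable. Since the $x$- and $y$-assertions are mirror images, I would prove only the $x$-identity; the $y$-identity then follows verbatim under $x\mapsto y$, $\alpha_s^x\mapsto\alpha_t^y$, $a_s^x\mapsto a_t^y$.

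For the $x$-identity, the key is to read~(\ref{eq:orthogonality_condition 1D}) with the single informative test function $w=x\in P^1([-1,1])$ together with $g=x^i$, $i\le r$. On each dual cell delimited by consecutive dual points $\alpha_s^x$ the piecewise-constant image $\Pi_{h,x}^{k,*}x$ equals the interpolation abscissa $a_s^x$, so expanding $\int_{-1}^1 x^i\,\Pi_{h,x}^{k,*}x\,\ud x$ cellwise and performing an Abel rearrangement makes the telescoping boundary contributions cancel and leaves precisely the coefficients $a_s^x-a_{s-1}^x$ against $(\alpha_s^x)^{i+1}$. This is exactly the equivalent algebraic form~(\ref{eq:OCequiv}), which I would simply invoke rather than re-derive. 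Interpreted as a quadrature identity, (\ref{eq:OCequiv}) says that the rule $Q(f):=\sum_{s=1}^k(a_s^x-a_{s-1}^x)f(\alpha_s^x)$ reproduces $\int_{-1}^1 x^{i+1}\,\ud x$ for all $i\in\{0,\dots,r\}$, i.e.\ $Q(x^j)=\int_{-1}^1 x^j\,\ud x$ for $j=1,\dots,r+1$; the case $j=0$ is automatic because $\sum_{s=1}^k(a_s^x-a_{s-1}^x)=a_k^x-a_0^x=2=\int_{-1}^1 1\,\ud x$. Hence $Q$ is exact on all of $P^{r+1}([-1,1])$.

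It remains to identify $Q$ with the $k$-point quadrature whose nodes are the $\alpha_s^x$ and whose weights are the $A_s^x$ of the statement. I would fix those weights by uniqueness: the weights of a $k$-node rule exact on $P^{k-1}$ are uniquely determined, since the moment equations $\sum_s A_s(\alpha_s^x)^j=\int_{-1}^1 x^j\,\ud x$ ($j=0,\dots,k-1$) have an invertible Vandermonde coefficient matrix (the $\alpha_s^x$ being distinct). Because the hypothesis forces $r\ge k-1$, hence $P^{k-1}\subset P^{r+1}$, the rule $Q$ is in particular exact on $P^{k-1}$, so its weights must be the canonical ones, namely $A_s^x=a_s^x-a_{s-1}^x$. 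Thus the quadrature of the statement coincides with $Q$, which the previous paragraph showed to be exact on $P^{r+1}([-1,1])$, finishing the $x$-case and, by symmetry, the lemma. I expect the only delicate point to be the bookkeeping in the middle step---recognizing that probing~(\ref{eq:orthogonality_condition 1D}) with $w=x$ alone already encodes all the moment constraints (the choice $w=1$ yields nothing, as $\Pi_{h,x}^{k,*}$ reproduces constants), and that the orthogonality is exactly what lifts the generic exactness of a $k$-point rule from degree $k-1$ up to $r+1$. The Vandermonde uniqueness and the telescoping identity are routine.
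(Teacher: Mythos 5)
Your proof is correct. Note that the paper itself does not prove this lemma at all: it only remarks that the case $r=k-1$ is established in the cited references and that $k\le r\le 2k-2$ ``follows by a similar argument.'' Your argument supplies the missing details along what is evidently the intended route: reduce to one dimension via the tensorial decomposition, pass from (\ref{eq:orthogonality_condition 1D}) to the algebraic moment conditions (\ref{eq:OCequiv}) (which the paper already records in a remark, obtained exactly by the cellwise expansion of $\Pi_{h,x}^{k,*}x$ and Abel summation you describe), observe that the right-hand side of (\ref{eq:OCequiv}) is $\int_{-1}^{1}x^{i+1}\,\ud x$ so that the rule with weights $a_s^x-a_{s-1}^x$ is exact on $P^{r+1}$ (the degree-zero moment being the trivial telescoping sum $a_k^x-a_0^x=2$), and finally identify those weights with the $A_s^x$ of the statement by Vandermonde uniqueness of a $k$-node rule exact on $P^{k-1}$. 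The two points that genuinely need care --- that $w=x$ alone carries all the information since constants are reproduced, and that the standing hypothesis $r\ge k-1$ is what licenses the weight identification --- are both handled explicitly, so the argument is complete.
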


\subsection{Asymmetric-enabled M-decompositions}
\label{subsec:Ultra M-Decomposition}
Superclose functions $u_{I,Super}$ (superconvergence) and $u_{I,Ultra}$ (ultraconvergence) are defined via asymmetric-enabled M-decompositions AMD-Super and AMD-Ultra, respectively. They are expressed in terms of M-functions, with specific constraints applied to the coefficients in the expansion of the error between these functions and the exact solution.

The 1D \textbf{M-functions} on the 1D reference element $[-1,1]$ are defined by 
\begin{align}\label{eq:M-functions}
\hat{M}_0=1,\,\hat{M}_1=\hat{x},\,\hat{M}_2=\frac{1}{2}(\hat{x}^2-1),\,\dots,\,
\hat{M}_{i+1}=\frac{1}{2^i\,i!}\frac{\ud^{i-1}}{\ud\hat{x}^{i-1}}(\hat{x}^2-1)^i,\,\dots,\,
\end{align}
which have the following properties
\begin{align}\label{eq:M-functions properties}
\left\{ \begin{array}{ll}
\hat{M}_i(\pm 1)=0, & i=2,3,\dots, \\
(\hat{M}_i,\hat{M}_j)=0, & |i-j|\ne 2\quad\!\!\! \mathrm{and}\quad\!\!\! i\ne j.
\end{array}
\right.
\end{align}
Through an affine mapping, the 1D M-functions $M_i(x)$ on any 1D segment element can be defined. For any $K\in \mathcal{T}_{h}$, denote $M_{s}^{x}:=M_{s}(x)$ and $M_{t}^{y}:=M_{t}(y)$ on $K$ \cite{Chen.1995}. Then, decompose $u$ as
\begin{align}   \label{eq:u_Mdecomposition}
u=\left\{
\begin{array}{ll}
\sum\limits_{0\leq s,t\leq k+1} b_{s,t}M^{x}_{s}M^{y}_{t} +O(h^{k+2}), & \text{for AMD-Super,}   \\
\sum\limits_{0\leq s,t\leq k+2} b_{s,t}M^{x}_{s}M^{y}_{t} +O(h^{k+3}), & \text{for AMD-Ultra.}
\end{array}
\right.
\end{align}

\begin{lemma}\label{lem:M_function_Coeff}
Given a regular rectangular mesh $\mathcal{T}_{h}$, we have the following estimate of the coefficients of the M-decomposition (\ref{eq:u_Mdecomposition}) in adjoint elements.
\begin{subequations}
\begin{align}
\label{eq:M_function_x_Coeff}
&\frac{b_{1,k+1}^{K_{i,j}}}{h^{x}_{i}}-\frac{b_{1,k+1}^{K_{i+1,j}}}{h^{x}_{i+1}} = \mathnormal{O}(h^{k+2}),
       \quad \forall i\in\mathbb{Z}_{N_x-1}, j\in\mathbb{Z}_{N_y},\\
\label{eq:M_function_y_Coeff}
&\frac{b_{k+1,1}^{K_{i,j}}}{h^{y}_{j}}-\frac{b_{k+1,1}^{K_{i,j+1}}}{h^{y}_{j+1}} = \mathnormal{O}(h^{k+2}).
       \quad \forall i\in\mathbb{Z}_{N_x}, j\in\mathbb{Z}_{N_y-1}.
\end{align}
\end{subequations}
\end{lemma}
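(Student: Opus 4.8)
The plan is to reduce both estimates to the smoothness of $u$ by writing each relevant M-coefficient as a scaled derivative of $u$ at the element center, and then comparing the centers of neighboring elements. Throughout I work on the reference square $\hat K$, on which the definition (\ref{eq:M-functions}) shows that each $\hat M_m$ is a polynomial of degree $m$ with parity $(-1)^m$. On $K_{i,j}$ the functions $M_s^x,M_t^y$ are the affine images of $\hat M_s,\hat M_t$, with $M_1^x$ carrying the local coordinate $\hat x=2\big(x-\tfrac{x_{i-1}+x_i}{2}\big)/h_i^x$. Write $\boldsymbol{c}_{i,j}$ for the center of $K_{i,j}$.

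First I would make explicit the change of basis between the Taylor monomials $\hat x^m\hat y^n$ and the products $\hat M_s\hat M_t$. Since $\hat M_m$ has degree $m$ and fixed parity, inverting the triangular relation $\hat x^m=\sum_{l\le m,\,l\equiv m}c_{m,l}\hat M_l$ shows that the coefficient $b_{s,t}^{K_{i,j}}$ in (\ref{eq:u_Mdecomposition}) is a fixed linear combination of the Taylor coefficients $\tfrac{1}{m!\,n!}(h_i^x/2)^m(h_j^y/2)^n\,\partial_x^m\partial_y^n u(\boldsymbol{c}_{i,j})$, taken over $m\ge s$, $n\ge t$ with $m\equiv s$, $n\equiv t$ modulo $2$. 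Reading off the dominant term $(m,n)=(s,t)$ and noting that the next contributing indices jump by two in each direction, I obtain a universal, element-independent constant $\kappa$ with
\[
b_{1,k+1}^{K_{i,j}}=\kappa\,h_i^x\,(h_j^y)^{k+1}\,\partial_x\partial_y^{k+1}u(\boldsymbol{c}_{i,j})+\mathcal{O}(h^{k+4}).
\]
Dividing by $h_i^x\gtrsim h$ (quasi-uniformity) yields $b_{1,k+1}^{K_{i,j}}/h_i^x=\kappa\,(h_j^y)^{k+1}\,\partial_x\partial_y^{k+1}u(\boldsymbol{c}_{i,j})+\mathcal{O}(h^{k+3})$.

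Next I would difference this across the $x$-neighbor $K_{i+1,j}$. The same formula holds on $K_{i+1,j}$ with the identical constant $\kappa$ and the identical factor $(h_j^y)^{k+1}$, because $K_{i,j}$ and $K_{i+1,j}$ share the $y$-interval $[y_{j-1},y_j]$; hence their centers $\boldsymbol{c}_{i,j},\boldsymbol{c}_{i+1,j}$ have the same $y$-coordinate and differ only in $x$ by $(h_i^x+h_{i+1}^x)/2=\mathcal{O}(h)$. A first-order Taylor expansion of $\partial_x\partial_y^{k+1}u$ in the $x$-variable then bounds the difference of the leading terms by $\mathcal{O}\big((h_j^y)^{k+1}\big)\cdot\mathcal{O}(h)=\mathcal{O}(h^{k+2})$, giving
\[
\frac{b_{1,k+1}^{K_{i,j}}}{h_i^x}-\frac{b_{1,k+1}^{K_{i+1,j}}}{h_{i+1}^x}=\mathcal{O}(h^{k+2}),
\]
which is (\ref{eq:M_function_x_Coeff}); exchanging the roles of $x$ and $y$ gives (\ref{eq:M_function_y_Coeff}).

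The main obstacle is the first step: establishing the leading-order coefficient identity with a constant $\kappa$ that is genuinely independent of the element and a remainder of order at least $h^{k+3}$ after division by $h_i^x$. This rests on two structural facts about the M-functions — the definite parity (so corrections advance by two orders rather than one) and the fact that the Taylor-to-M change of basis lives entirely on $\hat K$ — together with enough regularity of $u$ (here $u\in C^{k+3}$ suffices) to justify the Taylor remainders at the centers. Once this identity is in hand, the differencing is routine and relies only on the shared $y$-interval of the two neighbors.
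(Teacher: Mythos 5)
Your proof is correct in substance and lands on the same structural fact as the paper's, but it gets there by a different route. The paper never Taylor-expands or inverts the monomial-to-M change of basis: it starts from the exact integral representation of the coefficient, obtained by writing $u(x_i,y)-u(x_{i-1},y)$ as an $x$-integral and integrating by parts $k$ times against $L_k(\hat y)$, which gives
$b_{1,k+1}^{K_{i,j}}=\frac{h_i^x}{2}\big(\frac{h_j^y}{2}\big)^{k+1}\frac{(2k+1)(-1)^k}{2^{k+2}k!}\iint_{\hat K}\partial_x\partial_y^{k+1}u\,(\hat y^2-1)^k\,\mathrm{d}\hat x\,\mathrm{d}\hat y$;
the two normalized coefficients then combine into a single integral over $\hat K$ of a difference of $\partial_x\partial_y^{k+1}u$ at two $x$-arguments with the same $y$, and one application of the mean value theorem in $x$ produces the extra factor $H_{\hat x}=O(h)$. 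Your version replaces this exact representation by a Taylor expansion at the element center together with the triangularity and parity of the M-basis, and then differences the pointwise values at the two centers. Both arguments gain the decisive extra order from the same source (the $x$-difference of $\partial_x\partial_y^{k+1}u$ controlled by $\partial_x^2\partial_y^{k+1}u$, exploiting that the two elements share $[y_{j-1},y_j]$), so the approaches are close cousins; the paper's integral form is slightly more economical because it needs no statement about how the remainder of the Taylor series contributes to the coefficient functional, a point your write-up leaves implicit. Two minor cautions: your claimed per-element correction $\mathcal{O}(h^{k+4})$ requires one more derivative than the $H^{k+3}$ regularity used elsewhere for AMD-Ultra; under that regularity you only get $\mathcal{O}(h^{k+3})$ before division by $h_i^x$, which is still enough since after division it is $\mathcal{O}(h^{k+2})$ and the leading terms alone carry the cancellation. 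Also, the division by $h_i^x\gtrsim h$ does legitimately use quasi-uniformity, exactly as the paper's remark notes that identical neighbor sizes are not needed.
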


\begin{proof}
By the definition of $b_{s,t}^{K_{i,j}}$ on $K_{i,j}$ in (\ref{eq:u_Mdecomposition}) and integration by parts in $x$-direction, one has
\begin{align*}
b_{1,k+1}^{K_{i,j}} =& \frac{h^{y}_{j}}{2} \frac{2k+1}{4}\int_{-1}^{1} \frac{\partial ( u(x_{i},y)-u(x_{i-1},y) )}{\partial y}  \, L_{k}(\hat{y}) \ud \hat{y}\\
                    =& \frac{h^{x}_{i}}{2} \frac{h^{y}_{j}}{2} \frac{2k+1}{4}\iint_{\hat{K}}
                         \frac{\partial^2 u }{\partial x\partial y}(x,y)  \, L_{k}(\hat{y})\ud \hat{x}\ud \hat{y}\\
                    =& \frac{h^{x}_{i}}{2}\Big(\frac{h^{y}_{j}}{2}\Big)^{k+1}\frac{(2k+1)(-1)^{k}}{2^{k+2} k!}
                         \iint_{\hat{K}}  \frac{\partial^{k+2} u }{\partial x \partial y^{k+1}}(x,y)
                         (\hat{y}^{2}-1)^{k}\ud \hat{x}\ud \hat{y},
\end{align*}
where $\displaystyle x = \frac{h^{x}_{i}\hat{x}+x_{i-1}+x_{i}}{2}$, $\displaystyle y = \frac{h^{y}_{j}\hat{y}+y_{j-1}+y_{j}}{2}$, and $\displaystyle L_{k}(\hat{y})=\frac{1}{2^k\,k!}\frac{\ud^{k}}{\ud\hat{y}^{k}}(\hat{y}^2-1)^k$ is the Legendre function. Then, by the mean value theorem, we have
\begin{align*}
 \frac{b_{1,k+1}^{K_{i,j}}}{h^{x}_{i}}-\frac{b_{1,k+1}^{K_{i+1,j}}}{h^{x}_{i+1}}
=& \Big(\frac{h^{y}_{j}}{2}\Big)^{k+1}\frac{(2k+1)(-1)^{k}}{2^{k+3} k!}
     \iint_{\hat{K}}
     \Big(\frac{\partial^{k+2} u(\frac{h^{x}_{i}\hat{x}+x_{i-1}+x_{i}}{2},y)}{\partial x \partial y^{k+1}}\nonumber\\
 &\qquad
    -\frac{\partial^{k+2} u (\frac{h^{x}_{i+1}\hat{x}+x_{i}+x_{i+1}}{2},y )}{\partial x \partial y^{k+1}}\Big)
      (\hat{y}^{2}-1)^{k}\ud \hat{x}\ud \hat{y}\nonumber\\
=& \Big(\frac{h^{y}_{j}}{2}\Big)^{k+1}\frac{(2k+1)(-1)^{k}}{2^{k+3} k!}\iint_{\hat{K}}
      H_{\hat{x}}\frac{\partial^{k+3} u (\theta(\hat{x}),y)}{\partial x^{2} \partial y^{k+1}}
      (\hat{y}^{2}-1)^{k}\ud \hat{x}\ud \hat{y},
\end{align*}
where $\displaystyle \theta(\hat{x}) \in (\frac{h^{x}_{i}\hat{x}+x_{i-1}+x_{i}}{2},\, \frac{h^{x}_{i+1}\hat{x}+x_{i}+x_{i+1}}{2})$ and
\begin{align*}
H_{\hat{x}} =  \frac{(h^{x}_{i}-h^{x}_{i+1})\hat{x}-(h^{x}_{i}+h^{x}_{i+1})}{2}=O(h).
\end{align*}
Thus, we have the estimate (\ref{eq:M_function_x_Coeff}). Similarly, we have (\ref{eq:M_function_y_Coeff}).
\end{proof}
\begin{remark}
The higher-order approximation of adjacent elements in Lemma~\ref{lem:M_function_Coeff} plays a crucial role in the proof of Theorem~\ref{thm:Ultra-super_H1 2D} (\ref{eq:elr1U}). Notably, these properties does not require adjacent elements to have identical sizes and hold for quasi-uniform rectangular meshes.
\end{remark}

\begin{table}[htbp!]
  \centering
  \caption{Structure of AMD-Super for the case $k=3$.}\label{tab:relations1}
    \begin{minipage}[t]{.43\textwidth}
    \centering
    \title{a. Expression of $u_{I,Super}$}
    \vspace{0.1cm}
    \begin{tabular}{c|cc:ccc}
          & $1$       & $M_1^{y}$ & $M_2^{y}$ &  $M_3^{y}$ &  $M_4^{y}$\\
    \hline
    1     & $\boldsymbol{b_{00}^{S}}$  & $\boldsymbol{b_{01}^{S}}$  & $\boldsymbol{b_{02}^{S}}$  & $\boldsymbol{b_{03}^{S}}$  & -  \\
    $M_1^{x}$     & $\boldsymbol{b_{10}^{S}}$  & $\boldsymbol{b_{11}^{S}}$  & $\boldsymbol{b_{12}^{S}}$  & $\boldsymbol{b_{13}^{S}}$  & -  \\
    \hdashline
    $M_2^{x}$     & $\boldsymbol{b_{20}^{S}}$  & $\boldsymbol{b_{21}^{S}}$  & $\boldsymbol{b_{22}^{S}}$  & -  & -  \\
    $M_3^{x}$     & $\boldsymbol{b_{30}^{S}}$  & $\boldsymbol{b_{31}^{S}}$  & -  & -  & -  \\
    $M_4^{x}$     & -  & -  & -  & -  & -  \\
    \end{tabular}
    \end{minipage}
  \hspace{.2in}
    \begin{minipage}[t]{.43\textwidth}
    \centering
    \title{b. Expression of $u-u_{I,Super}$}
    \vspace{0.1cm}
    \begin{tabular}{c|cc:ccc}
          & $1$       & $M_1^{y}$ & $M_2^{y}$ &  $M_3^{y}$ &  $M_4^{y}$\\
    \hline
    1     & -  & -  & $\boldsymbol{b_{02}^{*}}$  & $\boldsymbol{b_{03}^{*}}$  & $b_{04}$  \\
    $M_1^{x}$     & -  & -  & $\boldsymbol{b_{12}^{*}}$  & $\boldsymbol{b_{13}^{*}}$  & $b_{14}$  \\
    \hdashline
    $M_2^{x}$     & $\boldsymbol{b_{20}^{*}}$  & $\boldsymbol{b_{21}^{*}}$  & -  & $b_{23}$  & $b_{24}$  \\
    $M_3^{x}$     &  $\boldsymbol{b_{30}^{*}}$  & $\boldsymbol{b_{31}^{*}}$  & $b_{32}$  & $b_{33}$  & $b_{34}$  \\
    $M_4^{x}$     & $b_{40}$  & $b_{41}$  & $b_{42}$  & $b_{43}$  & $b_{44}$  \\
    \end{tabular}
    \end{minipage}
\end{table}

\subsubsection{AMD-Super}
\begin{definition}[The AMD-Super constraints]\label{def:SMD 2D}
For any $K\in\mathcal{T}_{h}$, suppose $u\in H^{k+2}(\Omega)$. Let $\alpha_{m}^{x}$ and $\alpha_{m}^{y}$ ($m\in\mathbb{Z}_{k}$) be the dual parameters. Then, define the superclose function $u_{I, Super}\in U_{h}^{k}$ with expression (see Table~\ref{tab:relations1} for the case of $k=3$ as an example)
\begin{align}
&u_{I,Super} = \sum_{s + t \leq k + 1, \,\, s, t \leq k} b^{S}_{s,t}M^{x}_{s}M^{y}_{t},     \label{eq:UI_Super}\\
&b^{S}_{s,t}=\left\{
\begin{array}{ll}
b_{s,t},              &  (s,t)\in     \{0\leq s,t\leq 1\}\cup\{s,t\ge 2, \,\,  s+t\leq k+1\},\\
b_{s,t}-b^{*}_{s,t},  &  (s,t) \in    \{s=0,1, \,\, 2\leq t\leq k\}\cup\{2\leq s\leq k, \,\, t=0,1\},
\end{array}
\right.         \nonumber
\end{align}
with $b_{s,t}$ being the coefficients of $u$ in (\ref{eq:u_Mdecomposition}) and $b^{*}_{s,t}$ being the corrections defined by the following AMD-Super constraints
\begin{subequations}
\begin{align}
\label{eq:SMD2Da}
\sum^{k}_{s=2}b^{*}_{s,t}\hat{M}'_{s}(\alpha_m^x)+b_{k+1,t}\hat{M}'_{k+1}(\alpha_m^x)=0,&
\quad t=0,1\quad\!\!\! \mathrm{and}\quad\!\!\!m\in \mathbb{Z}_{k-1},\\
\label{eq:SMD2Dd}
\sum^{k}_{t=2}b^{*}_{s,t}\hat{M}'_{t}(\alpha_m^y)+b_{s,k+1}\hat{M}'_{k+1}(\alpha_m^y)=0,&
\quad s=0,1\quad\!\!\! \mathrm{and}\quad\!\!\!m\in \mathbb{Z}_{k-1}.
\end{align}
\end{subequations}
\end{definition}

The difference between $u$ and $u_{I,Super}$ on $K\in\mathcal{T}_h$ can be expressed as
\begin{align}\label{eq:u-uI res_S}
(u-u_{I,Super})|_{K} = R^{\mathrm{S}}_{x,K}+R^{\mathrm{S}}_{y,K}+R^{\mathrm{S},1}_{x,K}M^y_{1}+R^{\mathrm{S},1}_{y,K} M^x_{1}+  R^{\mathrm{S}}_{res},
\end{align}
where
\begin{align*}
&R^{\mathrm{S}}_{x,K}        = \sum^{k}_{s=2}b^{*}_{s,0}M^x_{s}+b_{k+1,0}M^x_{k+1},\qquad\qquad
 R^{\mathrm{S}}_{y,K}        = \sum^{k}_{t=2}b^{*}_{0,t}M^y_{t}+b_{0,k+1}M^y_{k+1},\\
&
R^{\mathrm{S},1}_{x,K} = \sum^{k}_{s=2} b^{*}_{s,1}M^x_{s}  +  b_{k+1,1}M^x_{k+1}  =O(h^{k+2}), \\
&
R^{\mathrm{S},1}_{y,K} = \sum^{k}_{t=2} b^{*}_{1,t}M^y_{t}  +  b_{1,k+1}M^y_{k+1}=O(h^{k+2}),\quad R^{\mathrm{S}}_{res}=O(h^{k+2}).
\end{align*}
When the dual strategy satisfies the tensorial $k$-$(k-1)$-order orthogonality condition, noticing that $\displaystyle \frac{\partial R^{\mathrm{S}}_{x,K}}{\partial x}\in P^{k}$ and $\displaystyle \frac{\partial R^{\mathrm{S}}_{y,K}}{\partial y}\in P^{k}$, combining with Lemma~\ref{lem:The equivalence of the orth condition} and the properties (\ref{eq:M-functions properties}) of the M-functions, (\ref{eq:SMD2Da})-(\ref{eq:SMD2Dd}) are also valid for $m=k$.
\begin{align}
\label{eq:Rx_alpha_S}
  \frac{\partial R^{\mathrm{S}}_{x,K}}{\partial x} (\alpha_{m,K}^x, y)=0 \,\, \mathrm{and} \,\,  \frac{\partial R^{\mathrm{S}}_{y,K}}{\partial y} (x,\alpha_{m,K}^y)=0, \quad m\in\mathbb{Z}_{k},   \\
\label{eq:M-functions_xpointS}
  R^{\mathrm{S}}_{x,K}(x_{i-1},y) = R^{\mathrm{S}}_{x,K}(x_{i},y) = 0 \,\, \mathrm{and}\,\,   R^{\mathrm{S}}_{y,K}(x,y_{j-1}) = R^{\mathrm{S}}_{y,K}(x,y_{j}) = 0.
\end{align}
Denote the set of the $(k+1)$ roots for $R_{K,\mathrm{S}}^{x}$ and $R_{K,\mathrm{S}}^{y}$ (as a 1D function of $x$ or $y$) as $\mathbb{P}_{x,K}^{\mathrm{S}}$ and $\mathbb{P}_{y,K}^{\mathrm{S}}$. Then, $u_{I,Super}$ has following superclose properties.
\begin{lemma}[Superclose properties of $u_{I,Super}$]\label{lem:u_uI_normS}
Given a regular rectangular mesh $\mathcal{T}_{h}$ of $\Omega$, let $u\in H^{1}_{0}(\Omega)\cap H^{k+2}(\Omega)$ and $u_{I,Super}\in U^{k}_h$ be derived by Definition~\ref{def:SMD 2D}. Then, the \textbf{global approximation properties} for $u_{I,Super}$ hold
\begin{subequations}
\begin{align}
\| u-u_{I,Super} \|_{1}\lesssim& \,h^{k} \|u\|_{k+1},     \label{eq:u_uI_H1}\\
\| u-u_{I,Super} \|_{0}\lesssim& \,h^{k+1} \|u\|_{k+1}.       \label{eq:u_uI_L2}
\end{align}
\end{subequations}
Furthermore, for any $K\in\mathcal{T}_{h}$, when the dual strategy satisfies the tensorial $k$-$(k-1)$-order orthogonality condition, the function $u_{I,Super}$ exhibits \textbf{derivative superclose properties}
\begin{subequations}
\begin{align}
    \label{eq:u_uI_H1_xSdisc}
 \Big|\frac{\partial }{\partial x} (u - u_{I,Super})\Big|
     = \mathnormal{O}(h^{k+1}), \quad \forall x\in\{\alpha_{m,K}^x\}_{m\in\mathbb{Z}_k},\,\, \forall y\in [y_{j-1},y_{j}], \\
    \label{eq:u_uI_H1_ySdisc}
 \Big|\frac{\partial }{\partial y} (u - u_{I,Super})\Big|
    = \mathnormal{O}(h^{k+1}), \quad \forall x\in [x_{i-1},x_{i}],\,\, \forall y\in \{ \alpha_{m,K}^y \}_{m\in\mathbb{Z}_k}.
\end{align}
\end{subequations}
When the dual strategy satisfies the tensorial $k$-$k$-order orthogonality condition, there is \textbf{function-value superclose properties}
\begin{align}\label{eq:u_uI_L2_Sdisc}
      |u - u_{I,Super}| = \mathnormal{O}(h^{k+2}),\quad \forall x\in\mathbb{P}_{x,K}^{\mathrm{S}},\,\, \forall y\in \mathbb{P}_{y,K}^{\mathrm{S}}.
\end{align}
\end{lemma}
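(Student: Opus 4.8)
The plan is to derive all three groups of estimates directly from the elementwise identity (\ref{eq:u-uI res_S}), which already sorts $u-u_{I,Super}$ on each $K$ into two leading pieces $R^{\mathrm{S}}_{x,K},R^{\mathrm{S}}_{y,K}$, two ``edge-one'' pieces $R^{\mathrm{S},1}_{x,K}M^y_1,R^{\mathrm{S},1}_{y,K}M^x_1$, and a corner residual $R^{\mathrm{S}}_{res}$. The preliminary inputs are the coefficient scalings $b^{K}_{s,t}=\mathcal{O}(h^{s+t})$ (read off, as in the proof of Lemma~\ref{lem:M_function_Coeff}, from the integral of $\partial^{s+t}u$ against the Legendre/M-functions), the norm scalings $\|M^x_sM^y_t\|_{0,K}\sim h$ and $\|\partial_x(M^x_sM^y_t)\|_{0,K}\sim 1$, and the sizes of the corrections: for each fixed $t$ the systems (\ref{eq:SMD2Da})--(\ref{eq:SMD2Dd}) have right-hand side proportional to $b_{k+1,t}$ (resp.\ $b_{s,k+1}$), so $b^{*}_{s,0},b^{*}_{0,t}=\mathcal{O}(h^{k+1})$ and $b^{*}_{s,1},b^{*}_{1,t}=\mathcal{O}(h^{k+2})$. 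Consequently $R^{\mathrm{S}}_{x,K},R^{\mathrm{S}}_{y,K}=\mathcal{O}(h^{k+1})$ while the remaining three pieces are $\mathcal{O}(h^{k+2})$.

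For the global bounds (\ref{eq:u_uI_H1})--(\ref{eq:u_uI_L2}) I would bound the $L^2$- and $H^1$-norms of each of the five pieces on $K$ and sum over the mesh using quasi-uniformity ($\sum_K h^2\sim1$, a Riemann-sum argument converting squared local coefficients into $\|\partial^{k+1}u\|_0^2$). The dominant contribution is $R^{\mathrm{S}}_{x,K}+R^{\mathrm{S}}_{y,K}$: its $\mathcal{O}(h^{k+1})$ coefficients give $\mathcal{O}(h^{k+1})$ in $L^2$ and, after the single power of $h^{-1}$ paid by differentiation, $\mathcal{O}(h^{k})$ in $H^1$; all other pieces are of higher order. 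No orthogonality is used here, matching the unconditional nature of (\ref{eq:u_uI_H1})--(\ref{eq:u_uI_L2}).

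For the derivative superclose (\ref{eq:u_uI_H1_xSdisc}) I would differentiate (\ref{eq:u-uI res_S}) in $x$ and evaluate at $x=\alpha_{m,K}^x$. The leading piece vanishes exactly, $\partial_x R^{\mathrm{S}}_{x,K}(\alpha_{m,K}^x,y)=0$ by (\ref{eq:Rx_alpha_S}), and $\partial_x R^{\mathrm{S}}_{y,K}=0$ since $R^{\mathrm{S}}_{y,K}$ is independent of $x$. This is exactly where the tensorial $k$-$(k-1)$ condition is consumed: it promotes the constraints (\ref{eq:SMD2Da})--(\ref{eq:SMD2Dd}) from $m\le k-1$ to $m=k$ through the quadrature exactness of Lemma~\ref{lem:The equivalence of the orth condition} together with the properties (\ref{eq:M-functions properties}). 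The surviving three terms each lose one power of $h$ under $\partial_x$ but start at $\mathcal{O}(h^{k+2})$, hence are $\mathcal{O}(h^{k+1})$; the statement (\ref{eq:u_uI_H1_ySdisc}) is symmetric in the roles of $x$ and $y$.

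For the function-value superclose (\ref{eq:u_uI_L2_Sdisc}) I would evaluate (\ref{eq:u-uI res_S}) at a tensor node $(x,y)\in\mathbb{P}_{x,K}^{\mathrm{S}}\times\mathbb{P}_{y,K}^{\mathrm{S}}$; by the definition of the root sets $R^{\mathrm{S}}_{x,K}(x)=R^{\mathrm{S}}_{y,K}(y)=0$, which annihilates the two $\mathcal{O}(h^{k+1})$ pieces and leaves $R^{\mathrm{S},1}_{x,K}M^y_1+R^{\mathrm{S},1}_{y,K}M^x_1+R^{\mathrm{S}}_{res}=\mathcal{O}(h^{k+2})$ (this is where $u\in H^{k+2}$ is needed). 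I expect the genuine obstacle to be not this order count but the certification that $\mathbb{P}_{x,K}^{\mathrm{S}}$ really consists of $k+1$ \emph{real} abscissae lying in $[x_{i-1},x_i]$: the polynomial $R^{\mathrm{S}}_{x,K}$ has degree $k+1$, vanishes at the two endpoints by (\ref{eq:M-functions_xpointS}) and is critical at the $k$ dual points by (\ref{eq:Rx_alpha_S}), so realness reduces to showing its values at the dual points alternate in sign. This sign-alternation is the point at which the stronger tensorial $k$-$k$ condition enters; through Lemma~\ref{lem:The equivalence of the orth condition} it upgrades the quadrature to exactness on $P^{k+1}$ (equivalently, forces the node polynomial $\prod_m(\hat{x}-\alpha_m^x)$ to be orthogonal to $P^1$), which pins down $R^{\mathrm{S}}_{x,K}$ up to scaling and yields the required oscillation, so that the $(k+1)^2$ tensor nodes are genuine, tunable, possibly asymmetric function-value superconvergence points.
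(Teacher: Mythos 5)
Your proposal follows essentially the same route as the paper, which proves this lemma only implicitly through the decomposition (\ref{eq:u-uI res_S}), the correction-coefficient scalings, and the vanishing properties (\ref{eq:Rx_alpha_S})--(\ref{eq:M-functions_xpointS}); your order counting for the global, derivative, and function-value estimates is exactly the intended argument. One small inaccuracy in your closing paragraph: orthogonality of $\prod_m(\hat x-\alpha^x_m)$ to $P^1$ does not by itself pin down $R^{\mathrm{S}}_{x,K}$ up to scaling when $k\ge 2$ --- that normalization already follows from (\ref{eq:Rx_alpha_S}) and (\ref{eq:M-functions_xpointS}) under the $k$-$(k-1)$ condition, since $\partial_x R^{\mathrm{S}}_{x,K}$ has degree $k$ with the $k$ prescribed roots $\alpha^x_{m,K}$ --- and the realness and location in $[x_{i-1},x_i]$ of the $k+1$ roots, which you rightly flag as the delicate point, is in fact asserted rather than proved in the paper as well.
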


\subsubsection{AMD-Ultra}
\begin{definition}[The AMD-Ultra constraints]\label{def:UMD 2D}
For any $K\in\mathcal{T}_{h}$, suppose $u\in H^{k+3}(\Omega)$. Let $\alpha_{m}^{x}$ and $\alpha_{m}^{y}$ ($m\in\mathbb{Z}_{k}$) be the dual parameters. Then, define the superclose function $u_{I, Ultra}\in U_{h}^{k}$ with expression (see Table~\ref{tab:relations2} for the case of $k=3$ as an example)
\begin{align*}
&u_{I,Ultra} = \sum_{s + t \leq k + 2, \,\, s, t \leq k} b^{U}_{s,t}M^{x}_{s}M^{y}_{t},\\
&b^{U}_{s,t}=\left\{
\begin{array}{ll}
b_{s,t},                      &  (s,t)\in     \{0\leq s,t\leq 1\}\cup\{s,t\ge 2 , \,\, s+t\leq k+2\},\\
b_{s,t}-b^{*}_{s,t}-b^{1,*}_{s,t},  &  (s,t) \in    \{s=0,1, \,\, 2\leq t\leq k\}\cup\{2\leq s\leq k,\,\, t=0,1\},
\end{array}
\right.
\end{align*}
with $b^{*}_{s,t}$ satisfying (\ref{eq:SMD2Da})-(\ref{eq:SMD2Dd}) and $b^{1,*}_{s,t}$ satisfying
\begin{subequations}
\begin{align}
\label{eq:UMD2Da}
\sum^{k}_{s=2}b^{1,*}_{s,t}\hat{M}'_{s}(\alpha_m^x)    +   b_{k+2,t}\hat{M}'_{k+2}(\alpha_m^x)=0,&
\quad t=0,1\quad\!\!\! \mathrm{and}\quad\!\!\!m\in \mathbb{Z}_{k-1},\\
\label{eq:UMD2Dd}
\sum^{k}_{t=2}b^{1,*}_{s,t}\hat{M}'_{t}(\alpha_m^y)    +   b_{s,k+2}\hat{M}'_{k+2}(\alpha_m^y)=0,&
\quad s=0,1\quad\!\!\! \mathrm{and}\quad\!\!\!m\in \mathbb{Z}_{k-1}.
\end{align}
\end{subequations}
The equations (\ref{eq:SMD2Da})-(\ref{eq:SMD2Dd}) and (\ref{eq:UMD2Da})-(\ref{eq:UMD2Dd}) together are called the AMD-Ultra constraints.
\end{definition}

\begin{table}[htbp!]
  \centering
  \caption{Structure of the AMD-Ultra for the case $k=3$.}\label{tab:relations2}

    \begin{minipage}[t]{.41\textwidth}
    \centering
    \title{a. Expression of $u_{I,Ultra}$}
    \vspace{0.1cm}

  \resizebox{\textwidth}{!}
  {    \begin{tabular}{c|cc:ccc}
          & $1$       & $M_1^{y}$ & $M_2^{y}$ &  $M_3^{y}$ &  $M_4^{y}$\\
    \hline
    1     & $\boldsymbol{b_{00}^{U}}$  & $\boldsymbol{b_{01}^{U}}$  & $\boldsymbol{b_{02}^{U}}$  & $\boldsymbol{b_{03}^{U}}$  & -  \\
    $M_1^{x}$     & $\boldsymbol{b_{10}^{U}}$  & $\boldsymbol{b_{11}^{U}}$  & $\boldsymbol{b_{12}^{U}}$  & $\boldsymbol{b_{13}^{U}}$  & -  \\
    \hdashline
    $M_2^{x}$     & $\boldsymbol{b_{20}^{U}}$  & $\boldsymbol{b_{21}^{U}}$  & $\boldsymbol{b_{22}^{U}}$  & $\boldsymbol{b_{23}^{U}}$  & -  \\
    $M_3^{x}$     & $\boldsymbol{b_{30}^{U}}$  & $\boldsymbol{b_{31}^{U}}$  & $\boldsymbol{b_{32}^{U}}$  & -  & -  \\
    $M_4^{x}$     & -  & -  & -  & -  & -  \\
    \end{tabular}
  }
    \end{minipage}
  \hspace{.2in}
    \begin{minipage}[t]{.46\textwidth}
    \centering
  \title{b. Expression of $b_{st}^{1,*}$ in $u-u_{I,Ultra}$}
    \vspace{0.1cm}

  \resizebox{\textwidth}{!}
  {
     \begin{tabular}{c|cc:cccc}
          & $1$       & $M_1^{y}$ & $M_2^{y}$ &  $M_3^{y}$ &  $M_4^{y}$ &  $M_5^{y}$\\
    \hline
    1     & -  & -  & $\boldsymbol{b_{02}^{1,*}}$  & $\boldsymbol{b_{03}^{1,*}}$  & $b_{04}$   & $b_{05}$  \\
    $M_1^{x}$     & -  & -  & $\boldsymbol{b_{12}^{1,*}}$  & $\boldsymbol{b_{13}^{1,*}}$  & $b_{14}$   & $b_{15}$  \\
    \hdashline
    $M_2^{x}$     & $\boldsymbol{b_{20}^{1,*}}$  & $\boldsymbol{b_{21}^{1,*}}$  & -  & -  & $b_{24}$  & $b_{25}$  \\
    $M_3^{x}$     &  $\boldsymbol{b_{30}^{1,*}}$  & $\boldsymbol{b_{31}^{1,*}}$  & -  & $b_{33}$  & $b_{34}$  & $b_{35}$  \\
    $M_4^{x}$     & $b_{40}$  & $b_{41}$  & $b_{42}$  & $b_{43}$  & $b_{44}$  & $b_{45}$  \\
    $M_5^{x}$     & $b_{50}$  & $b_{51}$  & $b_{52}$  & $b_{53}$  & $b_{54}$  & $b_{55}$  \\
    \end{tabular}
  }
    \end{minipage}
\end{table}

The difference between $u$ and $u_{I,Ultra}$ on $K \in \mathcal{T}_{h}$ can be expressed as
\begin{align}\label{eq:u-uI_res_U}
u-u_{I,Ultra} = &  R^{\mathrm{S}}_{x,K}+R^{\mathrm{U}}_{x,K}+ R^{\mathrm{S}}_{y,K} + R^{\mathrm{U}}_{y,K} \nonumber\\
                & +(R^{\mathrm{S},1}_{x,K}+R^{\mathrm{U},1}_{x,K})M^y_{1}+(R^{\mathrm{S},1}_{y,K}+R^{\mathrm{U},1}_{y,K})M^x_{1}+  R^{\mathrm{U}}_{res},
\end{align}
where, $R^{\mathrm{S}}_{x,K}$, $R^{\mathrm{S}}_{y,K}$, $R^{\mathrm{S},1}_{x,K}$, and $R^{\mathrm{S},1}_{y,K}$ are given by (\ref{eq:u-uI res_S}), and
\begin{align*}
&R^{\mathrm{U}}_{x,K}        = \sum^{k}_{s=2}b^{1,*}_{s,0}M^x_{s}+b_{k+2,0}M^x_{k+2}=O(h^{k+2}),\\
& R^{\mathrm{U}}_{y,K}        = \sum^{k}_{t=2}b^{1,*}_{0,t}M^y_{t}+b_{0,k+2}M^y_{k+2}=O(h^{k+2}),\\
&
R^{\mathrm{U},1}_{x,K} =  \sum^{k}_{s=2} b^{1,*}_{s,1}M^x_{s}  +  b_{k+2,1}M^x_{k+2} =O(h^{k+3}),   \\
&
R^{\mathrm{U},1}_{y,K} =  \sum^{k}_{t=2} b^{1,*}_{1,t}M^y_{t}  +  b_{1,k+2}M^y_{k+2} =O(h^{k+3}),   \qquad  R^{\mathrm{U}}_{res}=O(h^{k+3}).
\end{align*}

When the dual strategy satisfies the tensorial $k$-$k$-order orthogonality condition, noticing that $\displaystyle \frac{\partial R^{\mathrm{U}}_{x,K}}{\partial x},\,\frac{\partial R^{\mathrm{S,1}}_{x,K}}{\partial x}\in P^{k+1}$ and $\displaystyle \frac{\partial R^{\mathrm{U}}_{y,K}}{\partial y},\,\frac{\partial R^{\mathrm{S,1}}_{y,K}}{\partial y}\in P^{k+1}$, combining with Lemma~\ref{lem:The equivalence of the orth condition} and the properties (\ref{eq:M-functions properties}) of the M-functions, (\ref{eq:UMD2Da})-(\ref{eq:UMD2Dd}) are also valid for $m=k$. Thus, for all $m\in\mathbb{Z}_{k}$,
\begin{align}
\label{eq:Rx_alpha_U}
&\left\{
\begin{array}{ll}
\displaystyle \frac{\partial R^{\mathrm{U}}_{x,K}}{\partial x} (\alpha_{m,K}^x, y)=0,           &   \mathrm{and} \,\,  \displaystyle \frac{\partial R^{\mathrm{U}}_{y,K}}{\partial y} (x,\alpha_{m,K}^y)=0,   \\
\displaystyle \frac{\partial (R^{\mathrm{S,1}}_{x,K}M_1^{y})}{\partial x} (\alpha_{m,K}^x, y)=0, ,        &   \mathrm{and} \,\,   \displaystyle \frac{\partial (R^{\mathrm{S,1}}_{x,K}M_1^{y})}{\partial y} (x, y)=0,  \,\,  \forall x\in\mathbb{P}_{x,K}^{\mathrm{S}},\\
\displaystyle \frac{\partial (R^{\mathrm{S,1}}_{y,K}M_1^{x})}{\partial x} (x, y)=0,   \,\,  \forall y\in\mathbb{P}_{y,K}^{\mathrm{S}}, &   \mathrm{and} \,\,   \displaystyle \frac{\partial (R^{\mathrm{S,1}}_{y,K}M_1^{x})}{\partial y} (x,\alpha_{m,K}^y)=0,\\
\end{array}
\right. \\
\label{eq:M-functions_xpointU}
&\left\{
\begin{array}{ll}
R^{\mathrm{U}}_{x,K}(x_{i-1},y) = R^{\mathrm{U}}_{x,K}(x_{i},y) = 0         &   \mathrm{and} \quad   R^{\mathrm{U}}_{y,K}(x,y_{j-1}) = R^{\mathrm{U}}_{y,K}(x,y_{j}) = 0,     \\
R^{\mathrm{S},1}_{x,K}(x_{i-1},y) = R^{\mathrm{S},1}_{x,K}(x_{i},y) = 0     &   \mathrm{and} \quad   R^{\mathrm{S},1}_{y,K}(x,y_{j-1}) = R^{\mathrm{S},1}_{y,K}(x,y_{j}) = 0.
\end{array}
\right.
\end{align}
\begin{remark}
Since equations (\ref{eq:SMD2Da}) for $t=0,1$ share same zeros points $\alpha_m^{x}$ ($m\in\mathbb{Z}_{k}$), $R_{x,K}^{\mathrm{S},1}$ is a scalar multiple of $R_{x,K}^{\mathrm{S}}$. Consequently, $\mathbb{P}_{x,K}^{\mathrm{S}}$ also constitutes the root set of $R_{x,K}^{\mathrm{S},1}$. Similarly, $\mathbb{P}_{y,K}^{\mathrm{S}}$ forms the root set of $R_{y,K}^{\mathrm{S},1}$.
\end{remark}

\begin{lemma}[Superclose properties of $u_{I,Ultra}$]\label{lem:u_uI_normU}
Given a regular rectangular mesh $\mathcal{T}_{h}$ of $\Omega$, let $u\in H^{1}_{0}(\Omega)\cap H^{k+3}(\Omega)$ and $u_{I,Ultra}\in U^{k}_h$ be derived by Definition~\ref{def:UMD 2D}. Then, similar \textbf{global approximation properties} as (\ref{eq:u_uI_H1})-(\ref{eq:u_uI_L2}) hold for $u_{I,Ultra}$.

Furthermore, when the dual strategy satisfies the tensorial $k$-$k$-order orthogonality condition, for any $K\in\mathcal{T}_{h}$, the function $u_{I,Ultra}$ exhibits \textbf{derivative superclose properties}
\begin{subequations}
\begin{align}
    \label{eq:u_uI_H1_xUdisc}
 \Big|\frac{\partial }{\partial x} (u - u_{I,Ultra})\Big|
     = \mathnormal{O}(h^{k+2}), \quad x\in \{\alpha_{m,K}^x\}_{m\in\mathbb{Z}_k} \,\,\mathrm{and}\,\,  y\in \mathbb{P}_{y,K}^{\mathrm{S}}, \\
    \label{eq:u_uI_H1_yUdisc}
 \Big|\frac{\partial }{\partial y} (u - u_{I,Ultra})\Big|
    = \mathnormal{O}(h^{k+2}), \quad x\in \mathbb{P}_{x,K}^{\mathrm{S}}\,\,\mathrm{and}\,\,  y\in \{\alpha_{m,K}^y\}_{m\in\mathbb{Z}_k} .
\end{align}
\end{subequations}
\end{lemma}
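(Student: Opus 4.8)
The plan is to establish the global bounds by reduction to the already-proven $u_{I,Super}$ case, and then to prove the two derivative superclose estimates (\ref{eq:u_uI_H1_xUdisc})--(\ref{eq:u_uI_H1_yUdisc}) by differentiating the explicit residual decomposition (\ref{eq:u-uI_res_U}) in $x$ (resp.\ $y$) and checking, term by term, that at the prescribed points every contribution is either exactly zero, by the AMD-Ultra/AMD-Super constraints, or genuinely of size $O(h^{k+2})$. By symmetry it suffices to treat (\ref{eq:u_uI_H1_xUdisc}); (\ref{eq:u_uI_H1_yUdisc}) then follows by interchanging the roles of $x$ and $y$.

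For the global estimates I would note that $u_{I,Ultra}$ and $u_{I,Super}$ differ only by the additional corrections $b^{1,*}_{s,t}$ together with the extra diagonal band of terms $b_{s,t}M^x_sM^y_t$ with $s,t\ge 2$, $s+t=k+2$. Since $b^{1,*}_{s,t}=O(h^{k+2})$ and $b_{s,t}=O(h^{s+t})=O(h^{k+2})$ on this band, we get $\|u_{I,Ultra}-u_{I,Super}\|_0=O(h^{k+2})$ and, by the inverse inequality on the quasi-uniform mesh, $\|u_{I,Ultra}-u_{I,Super}\|_1=O(h^{k+1})$. The triangle inequality combined with Lemma~\ref{lem:u_uI_normS} then reproduces the same leading-order global bounds, so only the pointwise derivative estimates are genuinely new.

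For (\ref{eq:u_uI_H1_xUdisc}) I would differentiate (\ref{eq:u-uI_res_U}) in $x$ and sort the resulting pieces into three groups. (i) The purely $y$-dependent pieces $R^{\mathrm{S}}_{y,K}$ and $R^{\mathrm{U}}_{y,K}$ have vanishing $x$-derivative. (ii) Evaluated at $x=\alpha^x_{m,K}$, the terms $\partial_x R^{\mathrm{S}}_{x,K}$, $\partial_x R^{\mathrm{U}}_{x,K}$ and $\partial_x(R^{\mathrm{S},1}_{x,K}M^y_1)$ vanish identically by (\ref{eq:Rx_alpha_S}) and (\ref{eq:Rx_alpha_U}); here the extension of the defining constraints from $m\in\mathbb{Z}_{k-1}$ to $m=k$ is exactly where the tensorial $k$-$k$-order orthogonality enters, via Lemma~\ref{lem:The equivalence of the orth condition} and the M-function orthogonality (\ref{eq:M-functions properties}). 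The one delicate term is $\partial_x(R^{\mathrm{S},1}_{y,K}M^x_1)=R^{\mathrm{S},1}_{y,K}\,\partial_x M^x_1$: since $R^{\mathrm{S},1}_{y,K}$ is only $O(h^{k+2})$ while $\partial_x M^x_1=O(h^{-1})$, a naive bound gives merely $O(h^{k+1})$, which is too weak. This term is rescued by the restriction $y\in\mathbb{P}^{\mathrm{S}}_{y,K}$: because $R^{\mathrm{S},1}_{y,K}$ is a scalar multiple of $R^{\mathrm{S}}_{y,K}$, it vanishes on the common root set $\mathbb{P}^{\mathrm{S}}_{y,K}$, so the whole term is zero there (this is precisely the third line of (\ref{eq:Rx_alpha_U})). (iii) The surviving terms $\partial_x(R^{\mathrm{U},1}_{x,K}M^y_1)$, $\partial_x(R^{\mathrm{U},1}_{y,K}M^x_1)$ and $\partial_x R^{\mathrm{U}}_{res}$ are each $O(h^{k+2})$, using $R^{\mathrm{U},1}_{x,K},R^{\mathrm{U},1}_{y,K},R^{\mathrm{U}}_{res}=O(h^{k+3})$, the fact that one $x$-differentiation costs a single factor $h^{-1}$ on a quasi-uniform mesh, and $M^x_1,M^y_1=O(1)$. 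Summing the three groups yields $|\partial_x(u-u_{I,Ultra})|=O(h^{k+2})$ at $x\in\{\alpha^x_{m,K}\}$, $y\in\mathbb{P}^{\mathrm{S}}_{y,K}$.

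The hard part will be the tight bookkeeping in group (iii) together with the near-cancellation in the delicate cross term: the estimate is sharp because $R^{\mathrm{U},1}$ and $R^{\mathrm{U}}_{res}$ sit exactly one power of $h$ above the target after differentiation, so I must verify carefully that (a) the coefficient orders $b^{1,*}_{s,t}=O(h^{k+2})$ and $b_{s,t}=O(h^{s+t})$ hold with the correct dependence on $h^x_i,h^y_j$, in the spirit of Lemma~\ref{lem:M_function_Coeff}; (b) the $m=k$ extension of (\ref{eq:UMD2Da})--(\ref{eq:UMD2Dd}) truly requires the full $k$-$k$-order orthogonality and would fail at order $k$-$(k-1)$, since $\partial_x R^{\mathrm{U}}_{x,K}\in P^{k+1}$ must lie in the quadrature-exact range; and (c) the $O(h^{k+3})$ remainder of the M-decomposition (\ref{eq:u_Mdecomposition}) retains an $O(h^{k+2})$ first derivative, which is exactly where the hypothesis $u\in H^{k+3}(\Omega)$ is consumed.
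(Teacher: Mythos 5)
Your proposal is correct and follows essentially the same route the paper takes: the paper's own (largely implicit) argument is precisely the term-by-term differentiation of the residual decomposition (\ref{eq:u-uI_res_U}) combined with the vanishing properties (\ref{eq:Rx_alpha_S}) and (\ref{eq:Rx_alpha_U}) — including the extension of the constraints to $m=k$ via the tensorial $k$-$k$-order orthogonality and Lemma~\ref{lem:The equivalence of the orth condition}, and the cancellation of the delicate cross term $R^{\mathrm{S},1}_{y,K}\,\partial_x M^x_1$ on the root set $\mathbb{P}^{\mathrm{S}}_{y,K}$. You correctly isolate the only term that would otherwise spoil the $O(h^{k+2})$ bound, and your reduction of the global estimates to the $u_{I,Super}$ case via the higher-order corrections is consistent with the paper's remark that those bounds carry over.
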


\begin{lemma}\label{lem:M_function}
When the dual strategy satisfies the tensorial $k$-$k$-order orthogonality condition, we have the following properties for terms in (\ref{eq:u-uI res_S}) and (\ref{eq:u-uI_res_U}).
\begin{subequations}
\begin{align}
\label{eq:M_function_x}
&\int_{x_{i-1}}^{x_{i}} R^{\mathrm{S}}_{x,K} \ud x = \int_{x_{i-1}}^{x_{i}} R^{\mathrm{S},1}_{x,K} \ud x = 0,\quad \forall i\in \mathbb{Z}_{N_x},\\
\label{eq:M_function_y}
&\int_{y_{j-1}}^{y_{j}} R^{\mathrm{S}}_{y,K} \ud y = \int_{y_{j-1}}^{y_{j}} R^{\mathrm{S},1}_{y,K} \ud y = 0,\quad \forall j\in \mathbb{Z}_{N_y}.
\end{align}
\end{subequations}
\end{lemma}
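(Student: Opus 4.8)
The plan is to prove each of the four equalities by pulling everything back to the reference interval $[-1,1]$ and combining two facts already recorded for the M-function blocks: the terms $R^{\mathrm{S}}_{x,K}$ and $R^{\mathrm{S},1}_{x,K}$ vanish at the element endpoints (by (\ref{eq:M-functions_xpointS}), which itself follows from $\hat{M}_s(\pm1)=0$ for $s\ge2$ in (\ref{eq:M-functions properties})), and their $x$-derivatives vanish at all $k$ dual nodes $\alpha^x_{m,K}$, $m\in\mathbb{Z}_k$ (by (\ref{eq:Rx_alpha_S}), which holds at $m=k$ precisely because the scheme meets the tensorial $k$-$k$-order condition). Writing $\hat{P}(\hat{x})=\sum_{s=2}^{k}b^{*}_{s,0}\hat{M}_s(\hat{x})+b_{k+1,0}\hat{M}_{k+1}(\hat{x})$ for the pullback of $R^{\mathrm{S}}_{x,K}$, this is a polynomial of degree $k+1$ with $\hat{P}(\pm1)=0$ and $\hat{P}'(\alpha^x_m)=0$ for every $m\in\mathbb{Z}_k$, and the affine change of variables reduces the claim to $\int_{-1}^{1}\hat{P}\,\ud\hat{x}=0$.

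First I would integrate by parts: since $\hat{P}$ vanishes at $\pm1$, the boundary term drops and $\int_{-1}^{1}\hat{P}\,\ud\hat{x}=-\int_{-1}^{1}\hat{x}\,\hat{P}'\,\ud\hat{x}$. The integrand $\hat{x}\,\hat{P}'$ is a polynomial of degree at most $k+1$, hence lies in $P^{k+1}([-1,1])$, which is exactly the class for which the $k$-point dual quadrature of Lemma~\ref{lem:The equivalence of the orth condition} is exact when the $k$-$k$-order condition holds (there $r=k$, so exactness reaches degree $r+1=k+1$). Therefore $\int_{-1}^{1}\hat{x}\,\hat{P}'\,\ud\hat{x}=\sum_{m=1}^{k}A^{x}_{m}\,\alpha^x_{m}\,\hat{P}'(\alpha^x_m)=0$, because $\hat{P}'$ annihilates every node. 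Restoring the constant Jacobian $h^x_i/2$ yields $\int_{x_{i-1}}^{x_i}R^{\mathrm{S}}_{x,K}\,\ud x=0$. The same computation applies verbatim to $R^{\mathrm{S},1}_{x,K}$, whose pullback also vanishes at $\pm1$ and whose derivative vanishes at all dual nodes by (\ref{eq:SMD2Da}) with $t=1$ extended to $m=k$; alternatively, one invokes the remark that $R^{\mathrm{S},1}_{x,K}$ is a scalar multiple of $R^{\mathrm{S}}_{x,K}$, so its integral is automatically zero. The $y$-direction identities (\ref{eq:M_function_y}) follow by the identical argument with $x$ and $y$ interchanged.

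The delicate point is the degree bookkeeping, and it is what makes the hypothesis sharp. The relations $\hat{P}'(\alpha^x_m)=0$ only force $\hat{P}'$ to vanish at the $k$ dual points; by themselves they do not give $\int\hat{P}=0$, since $\hat{P}$ need not vanish there. The integration-by-parts step converts the integral into a quadrature of $\hat{x}\,\hat{P}'$, and the crux is that this integrand has degree $k+1$ rather than $k$. Consequently the weaker tensorial $k$-$(k-1)$-order condition, whose quadrature is exact only up to degree $k$, would not suffice; one genuinely needs the $k$-$k$-order condition, both to extend (\ref{eq:SMD2Da})-(\ref{eq:SMD2Dd}) to $m=k$ and to guarantee exactness on the degree-$(k+1)$ integrand. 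I expect the only care needed in writing up is to confirm that $\hat{x}\,\hat{P}'\in P^{k+1}$ (not $P^{k+2}$) and that the weights $A^x_m$ are those associated with the dual nodes in Lemma~\ref{lem:The equivalence of the orth condition}; everything else is mechanical.
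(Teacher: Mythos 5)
Your proof is correct, and since the paper itself omits the argument (deferring to the analogous 1D Lemma 7 of \cite{Wang.2024}), your write-up supplies exactly the intended mechanism: pull back to $[-1,1]$, use $\hat P(\pm 1)=0$ to integrate by parts, and apply the quadrature exactness of Lemma~\ref{lem:The equivalence of the orth condition} on the degree-$(k+1)$ integrand $\hat x\hat P'$ together with $\hat P'(\alpha^x_m)=0$ for all $m\in\mathbb{Z}_k$. Your sharpness observation is also right --- under only the $k$-$(k-1)$-order condition one still gets $\hat P'(\alpha^x_m)=0$ at all $k$ nodes, but $\int_{-1}^{1}\hat P\,\ud\hat x=-\tfrac{2}{3}b^{*}_{2,0}$ need not vanish, so the extra order of quadrature exactness is genuinely what closes the argument.
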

This lemma can be derived with a similar argument of Lemma 7 in \cite{Wang.2024}.

\section{Preliminary estimates}
\label{sec:Preparation_estimation}
This section presents the necessary preliminary estimates beyond the previous lemmas, including the inf-sup condition for the FVE bilinear form (Lemma~\ref{lem:Inf-sup condition}) and the estimate of the difference between the bilinear forms of FEM and FVEM (Lemma~\ref{lem:Estimation_for_E2}).

The inf-sup condition for FVE schemes on 2D quadrilateral meshes is given by \cite{Wang.2025}, which is obviously applicable to rectangular meshes.

\begin{lemma}[Inf-sup condition \cite{Wang.2025}]
 \label{lem:Inf-sup condition}
For sufficiently small mesh size $h$, the following inf-sup condition holds for the bilinear form of the FVE scheme (\ref{eq:FVEscheme}).
\begin{align}
\label{eq:Inf-sup condition}
\inf_{w_{h}\in U_{h}^{k}} \sup_{v_{h}\in U_{h}^{k}} \frac{a_{h}(w_{h},\Pi^{k,*}_{h} v_{h})}{\|w_{h}\|_{1}\, \|v_{h}\|_{1}} \geq c_{0},
\end{align}
where $c_{0} > 0$ is an $h$-independent constant.
\end{lemma}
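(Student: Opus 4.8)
The plan is to reduce the inf-sup inequality to a discrete coercivity estimate and then establish that estimate by splitting $a_h$ into its diffusion, convection, and reaction parts. Since the supremum over $v_h$ is bounded below by any single admissible choice, I would take $v_h = w_h$; this reduces \eqref{eq:Inf-sup condition} to proving the existence of an $h$-independent constant $c_0>0$ with
\[
a_h(w_h, \Pi^{k,*}_{h} w_h) \ge c_0\, \|w_h\|_1^2, \qquad \forall\, w_h \in U_{h}^{k},
\]
for $h$ sufficiently small. Indeed, given such an estimate, $\sup_{v_h} a_h(w_h,\Pi^{k,*}_{h} v_h)/(\|w_h\|_1\|v_h\|_1) \ge a_h(w_h,\Pi^{k,*}_{h} w_h)/\|w_h\|_1^2 \ge c_0$ uniformly in $w_h$, which is exactly the claimed inf-sup bound.

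First I would treat the lower-order (convection and reaction) contributions. The transfer operator $\Pi^{k,*}_{h}$ is piecewise constant, sampling $w_h$ at the interpolation points, so on each $K$ one has the local approximation $\|w_h-\Pi^{k,*}_{h}w_h\|_{0,K}\lesssim h\,|w_h|_{1,K}$. Writing the reaction and convection integrals of $a_h$ against $\Pi^{k,*}_{h}w_h$ as the corresponding Galerkin integrals against $w_h$ plus a remainder paired with $w_h-\Pi^{k,*}_{h}w_h$, the remainder is bounded by $C h\,|w_h|_1(\|w_h\|_0+|w_h|_1)$. For the genuine Galerkin convection term I would integrate by parts; since $w_h\in H^1_0(\Omega)$ the boundary integral vanishes and $\int_\Omega(\mathbb{Q}\cdot\nabla w_h)\,w_h = -\tfrac12\int_\Omega(\nabla\cdot\mathbb{Q})\,w_h^2$, so that convection and reaction combine into $\int_\Omega(\boldsymbol r-\tfrac12\nabla\cdot\mathbb{Q})\,w_h^2 \ge \kappa\|w_h\|_0^2$ by the structural hypothesis on the data.

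The crux is the diffusion part: I would show there is a constant $\gamma>0$, independent of $h$, such that the edge-flux terms $a_h^{\mathrm{diff}}(w_h,\Pi^{k,*}_{h}w_h)$, collecting $-\int_{\partial K^*\cap K}(\mathbb{D}\nabla w_h\cdot\boldsymbol n)\,\Pi^{k,*}_{h}w_h\,\ud s$, satisfy $a_h^{\mathrm{diff}}(w_h,\Pi^{k,*}_{h}w_h)\ge \gamma\,|w_h|_1^2$. The standard route is to freeze $\mathbb{D}$ to its elementwise value (the incurred error being $O(h)|w_h|_1^2$ by the smoothness of $\mathbb{D}$ and an inverse inequality) and to rescale each $K$ to $\hat K$ via $F_K$. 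This reduces the claim to the uniform positive definiteness, modulo constants, of the local element matrix representing $\hat w\mapsto a_h^{\mathrm{diff}}(\hat w,\Pi^{k,*}\hat w)$ on $Q^{k}(\hat K)$, for every frozen symmetric positive-definite tensor. This is precisely the reference-element stability analyzed in \cite{Wang.2025}: one exploits the tensorial factorization $\Pi_h^{k,*}=\Pi_{h,x}^{k,*}\Pi_{h,y}^{k,*}$ together with the exact-quadrature property of the dual parameters (Lemma~\ref{lem:The equivalence of the orth condition}) to compare the element matrix with the symmetric finite element stiffness matrix and to control its antisymmetric perturbation, producing a lower bound depending only on $k$, the dual strategies, and the spectral bounds of $\mathbb{D}$. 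I expect this reference-element positive-definiteness to be the main obstacle; everything else is a routine perturbation argument.

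Finally I would assemble the pieces. Summing the diffusion, convection, and reaction estimates over $\mathcal{T}_h$ gives
\[
a_h(w_h,\Pi^{k,*}_{h}w_h) \ge \gamma\,|w_h|_1^2 + \kappa\|w_h\|_0^2 - C h\big(|w_h|_1^2 + |w_h|_1\|w_h\|_0\big).
\]
Absorbing the mixed term by Young's inequality and invoking the Poincar\'e inequality on $U_{h}^{k}\subset H^1_0(\Omega)$ (so that $|w_h|_1^2\gtrsim\|w_h\|_1^2$), for $h$ below a threshold the $O(h)$ terms are dominated and the right-hand side is bounded below by $c_0\|w_h\|_1^2$. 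This yields the discrete coercivity and hence \eqref{eq:Inf-sup condition}. Since the entire argument uses only quasi-uniformity of the rectangular mesh, the reduction to $\hat K$, and properties already available for the dual strategy, it transfers verbatim from the quadrilateral setting of \cite{Wang.2025} to the rectangular meshes considered here, as the statement asserts.
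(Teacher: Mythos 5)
The paper itself does not prove this lemma: it is quoted directly from \cite{Wang.2025}, so the only meaningful comparison is between your argument and what that reference has to establish. The critical step in your proposal is the reduction of \eqref{eq:Inf-sup condition} to the coercivity estimate $a_h(w_h,\Pi^{k,*}_h w_h)\ge c_0\|w_h\|_1^2$ by taking $v_h=w_h$, and that is where the argument breaks down. Coercivity with the transferred test function $\Pi_h^{k,*}w_h$ is a strictly stronger property than the inf-sup condition, and for higher-order FVE schemes it depends delicately on the placement of the dual points: the reference-element matrix representing $\hat w\mapsto a_h^{\mathrm{diff}}(\hat w,\Pi^{k,*}\hat w)$ on $Q^k(\hat K)$ is nonsymmetric, and its symmetric part need not be positive semidefinite for an arbitrary admissible dual strategy $(\boldsymbol{\alpha}^x,\boldsymbol{\alpha}^y)$. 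The entire point of this paper's framework is to allow tunable, possibly asymmetric dual points constrained only by the orthogonality relations \eqref{eq:OCequiv}, and that class is not restricted to strategies for which elementwise positive definiteness holds; this is precisely why the stability result is stated as an inf-sup condition with independent $w_h$ and $v_h$ rather than as coercivity. Your own write-up concedes that the ``reference-element positive-definiteness'' is the crux and then defers it to \cite{Wang.2025}, but that reference (as its title and the very form of the lemma indicate) proves the inf-sup inequality for \emph{general} dual strategies, not the elementwise positivity your reduction requires; the standard mechanism in such proofs is to construct a test function $v_h$ depending on $w_h$ but distinct from it (e.g., built elementwise from the local element matrices), so the deferral does not close the gap.

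The remainder of your argument is sound but is the routine part: the perturbation treatment of the convection and reaction terms via $\|w_h-\Pi_h^{k,*}w_h\|_{0,K}\lesssim h\,|w_h|_{1,K}$, the integration by parts giving $\int_\Omega(\boldsymbol r-\tfrac12\nabla\cdot\mathbb{Q})w_h^2\ge\kappa\|w_h\|_0^2$, and the final assembly by Young's and Poincar\'e's inequalities would all go through \emph{if} the diffusion lower bound $a_h^{\mathrm{diff}}(w_h,\Pi_h^{k,*}w_h)\ge\gamma|w_h|_1^2$ were available. As written, the proof rests on an unproved, and for the generality of dual strategies considered here generally false, elementwise coercivity claim, so it does not establish the lemma for the family of schemes this paper actually uses.
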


The analysis of both superconvergence and ultraconvergence requires estimating the difference between the elementwise bilinear forms of the FVEM and FEM. Therefore, in Lemma~\ref{lem:Estimation_for_E2}, we derive this estimate for the general case of equation (\ref{eq:BVP}), which involves variable coefficients for diffusion, convection, and reaction terms, with the diffusion coefficient being a full tensor.

\begin{lemma}\label{lem:Estimation_for_E2}
Given a rectangular mesh $\mathcal{T}_{h}$, let $u\in H^{1}_{0}(\Omega)\cap H^{r+3}(\Omega)$ $(k-1 \le r \le 2k-2)$ be the solution of (\ref{eq:BVP}) and $u_I \in U_{h}^{k}$ satisfy (\ref{eq:u_uI_H1})-(\ref{eq:u_uI_L2}). For $K\in\mathcal{T}_{h}$, when $\Pi_{h}^{k,*}$ satisfies the tensorial $k$-$r$-order orthogonality condition, there holds
\begin{align}\label{eq:Estimation for E2}
\left|  a^{K}_{h}(u-u_{I}, \Pi_{h}^{k,*} w_{1,K})-a^{K}(u-u_I, w_{1,K})  \right|
\le C h^{r+2} \|u\|_{r+3,K}\,|w_{1,K}|_{1,K},
\end{align}
for any $w_{1,K}\in P^{1}(K)$. Here, $a^{K}(\cdot,\cdot)$ is the element-wise bilinear form of FEM
\begin{align} \label{eq:FEM}
a^{K}(v,w)     = \int_{K} (\mathbb{D}\nabla v)\cdot \nabla w + (\mathbb{Q}\cdot\nabla v+\boldsymbol{r} v)w \, \ud x \ud y, \quad \forall v,w \in H^{1}_{0}(\Omega).
\end{align}
\end{lemma}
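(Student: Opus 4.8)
The plan is to split the difference by differential operator and dispatch the three resulting pieces with the two orthogonality conditions of Lemma~\ref{lem:orth1}. Writing $v:=u-u_I$ and comparing the elementwise forms in (\ref{eq:ah_K}) and (\ref{eq:FEM}), I first decompose
\[
a^K_h(v,\Pi_h^{k,*}w_{1,K})-a^K(v,w_{1,K}) = E_{\mathrm{diff}}+E_{\mathrm{conv}}+E_{\mathrm{reac}},
\]
collecting the diffusion, convection and reaction contributions. The crucial structural fact I would use throughout is that $w_{1,K}\in P^1(K)$, so $\nabla w_{1,K}$ is constant and each edge trace $w_{1,K}|_{S_j}\in P^1(S_j)$; this is exactly the regularity of the test argument demanded by the volume orthogonality (\ref{eq:orth_condition1}) and the edge orthogonality (\ref{eq:orth_condition2}).

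The reaction and convection terms are pure volume integrals,
\[
E_{\mathrm{reac}}=\int_K \boldsymbol r\, v\,(\Pi_h^{k,*}w_{1,K}-w_{1,K}),\qquad E_{\mathrm{conv}}=\int_K (\mathbb Q\cdot\nabla v)(\Pi_h^{k,*}w_{1,K}-w_{1,K}),
\]
and I would estimate them identically. Because $\Pi_h^{k,*}w_{1,K}-w_{1,K}$ is $L^2(K)$-orthogonal to $Q^r(K)$ by (\ref{eq:orth_condition1}), I may subtract from the smooth factors $\boldsymbol r v$ and $\mathbb Q\cdot\nabla v$ an arbitrary $Q^r(K)$ polynomial. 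Freezing the variable coefficients at the centroid of $K$ and Taylor expanding absorbs the coefficient variation into higher powers of $h$; a Bramble–Hilbert argument on the reference element (with the affine map reinserting the $h$-powers), combined with $\|\Pi_h^{k,*}w_{1,K}-w_{1,K}\|_{0,K}\lesssim h\,|w_{1,K}|_{1,K}$, then yields the target order $h^{r+2}\|u\|_{r+3,K}|w_{1,K}|_{1,K}$.

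The diffusion term is the core. Applying the divergence theorem on each dual cell $K^*\cap K$, multiplying by the constant value of $\Pi_h^{k,*}w_{1,K}$ on $K^*$ and summing, the interior dual-skeleton fluxes reorganize into a volume integral and I obtain the identity
\[
a^K_{h,\mathrm{diff}}(v,\Pi_h^{k,*}w_{1,K})=-\int_K \nabla\!\cdot\!(\mathbb D\nabla v)\,\Pi_h^{k,*}w_{1,K}+\int_{\partial K}(\mathbb D\nabla v\cdot n)\,\Pi_h^{k,*}w_{1,K}.
\]
Subtracting the Green identity for $a^K(v,w_{1,K})$ gives
\[
E_{\mathrm{diff}}=-\int_K \nabla\!\cdot\!(\mathbb D\nabla v)(\Pi_h^{k,*}w_{1,K}-w_{1,K})+\int_{\partial K}(\mathbb D\nabla v\cdot n)(\Pi_h^{k,*}w_{1,K}-w_{1,K}).
\]
The volume part is handled exactly as the reaction term via (\ref{eq:orth_condition1}). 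For the boundary part I would, on each edge $S_j$, resolve $\mathbb D\nabla v\cdot n$ into its two directional components, subtract a best $P^r(S_j)$ approximation—annihilated by (\ref{eq:orth_condition2})—and estimate the remainder by a one-dimensional Bramble–Hilbert bound on $S_j$, together with $\|\Pi_h^{k,*}w_{1,K}-w_{1,K}\|_{0,S_j}\lesssim h^{1/2}|w_{1,K}|_{1,K}$ and a scaled trace inequality.

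I expect the boundary term to be the main obstacle, for two reasons. First, the bookkeeping of $h$-powers there is delicate: a crude trace estimate is not tight, and one must carry out Bramble–Hilbert on the reference edge with the affine scaling tracked exactly, so that the flux-error factor $\mathbb D\nabla(u-u_I)\cdot n$ supplies the extra power needed to reach $h^{r+2}$ rather than $h^{r+1}$. Second, and genuinely new here, the full (non-diagonal) tensor $\mathbb D$ couples the $x$- and $y$-directions both in $\nabla\!\cdot\!(\mathbb D\nabla v)$ and in the edge normal flux, so the one-dimensional orthogonality imposed separately in each direction (Definition~\ref{def:orthogonality_condition 1D}, via Lemma~\ref{lem:The equivalence of the orth condition}) must be combined through (\ref{eq:orth_conditionx})–(\ref{eq:orth_conditiony}). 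Finally, to make the estimate uniform over $k-1\le r\le 2k-2$ I would separate $v=u-u_I$ into the smooth part $u$—controlled by $\|u\|_{r+3,K}$—and the polynomial part $u_I$, for which the tensorial $k$-$r$-order orthogonality forces the relevant data into the annihilated spaces $Q^r(K)$ and $P^r(S_j)$ once $r\ge k$ (the borderline case $r=k-1$ being covered instead by the approximation properties (\ref{eq:u_uI_H1})–(\ref{eq:u_uI_L2})); this mirrors, in two dimensions, the one-dimensional mechanism of \cite{Wang.2024}.
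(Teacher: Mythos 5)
Your overall decomposition (Green's identity on each dual cell to compare $a^K_h$ with $a^K$, then splitting into reaction, convection, and diffusion contributions) matches the paper, and your treatment of the reaction and convection terms (coefficient interpolation plus the volume orthogonality (\ref{eq:orth_condition1}), with the borderline cases absorbed by (\ref{eq:u_uI_H1})--(\ref{eq:u_uI_L2})) is essentially the paper's argument. The gap is in the diffusion term, and it is the heart of the lemma: you estimate the volume piece $\int_K \nabla\cdot(\mathbb{D}\nabla v)(w_{1,K}-\Pi_h^{k,*}w_{1,K})$ and the boundary piece $\int_{\partial K}(\mathbb{D}\nabla v\cdot\boldsymbol{n})(w_{1,K}-\Pi_h^{k,*}w_{1,K})$ \emph{separately}, whereas the paper's proof lives on a cancellation between them. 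Concretely: (i) the volume piece is \emph{not} "handled exactly as the reaction term," because the reaction term carries $u-u_I=O(h^{k+1})$ in $L^2(K)$ while here you carry second derivatives of size $O(h^{k-1})$, so Cauchy--Schwarz lands at $h^{r+1}$ for $r=k-1$; and (\ref{eq:orth_condition1}) cannot rescue it, since $\nabla\cdot(\tilde{\mathbb{D}}\nabla u_I)$ contains $\partial_x^2 u_I$, of bidegree up to $(k-2,k)$, which does not lie in $Q^{r}(K)$ when $r=k-1$ --- precisely the case invoked in Theorem~\ref{thm:super_H1_2D}. (ii) The boundary piece, estimated edge by edge with (\ref{eq:orth_condition2}) and a scaled trace inequality, is also one order short: on a vertical edge the factors scale as $\|F-p\|_{0,S}\lesssim h^{r+1/2}$, $|c_y|\lesssim h^{-1}|w_{1,K}|_{1,K}$, $\|y-\Pi_h^{k,*}y\|_{0,S}\lesssim h^{3/2}$, giving $h^{r+1}$. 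The missing power does not come from sharper single-edge Bramble--Hilbert bookkeeping, as you suggest; it comes from pairing the two \emph{opposite} edges so that only the flux difference $F_1(x_i,y)-F_1(x_{i-1},y)=\int_{x_{i-1}}^{x_i}\partial_x F_1\,\ud x$ enters, which is one order smaller than either trace.

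The paper's mechanism is exactly this: writing $w_{1,K}-\Pi_h^{k,*}w_{1,K}=c_y(y-\Pi_h^{k,*}y)+c_x(x-\Pi_h^{k,*}x)$ and using that $y-\Pi_h^{k,*}y$ vanishes on horizontal edges and $x-\Pi_h^{k,*}x$ on vertical ones, the boundary term $E_{\mathbb{D},1}$ is rewritten as a volume integral of $\partial_x F_1\cdot(y-\Pi_h^{k,*}y)$ and $\partial_y F_2\cdot(x-\Pi_h^{k,*}x)$ (with $F_1=d_{11}\partial_x v+d_{12}\partial_y v$, $F_2=d_{21}\partial_x v+d_{22}\partial_y v$). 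Adding $E_{\mathbb{D},2}$, the direction-mismatched pairings cancel and only the matched ones $\int_K\partial_x F_1\,(x-\Pi_h^{k,*}x)$ and $\int_K\partial_y F_2\,(y-\Pi_h^{k,*}y)$ survive. These are then killed, up to smooth $O(h^{r+2})$ remainders, by the directional conditions (\ref{eq:orth_conditionx})--(\ref{eq:orth_conditiony}), whose decisive feature --- which (\ref{eq:orth_condition1}) lacks --- is that the test polynomial may have \emph{arbitrary} degree in the transverse variable, so only the $y$-degree of $\partial_y F_2$ (at most $k-1\le r$ after coefficient interpolation) matters. Without identifying this cancellation, your plan cannot reach $h^{r+2}$ at the lower end of the range $k-1\le r\le 2k-2$, and hence cannot support the derivative superconvergence theorem that uses $r=k-1$.
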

\begin{proof}
Applying integration by parts to both $a^{K}_{h}(\cdot,\cdot)$ and $a^{K}(\cdot,\cdot)$, their difference can be expressed as
\begin{align}\label{eq:E2}
  a^{K}_{h}(u-u_{I}, \Pi_{h}^{k,*}w_{1,K})-a^{K}(u-u_I, w_{1,K})
= E_{\mathbb{D},1}+E_{\mathbb{D},2}+E_{\mathbb{Q}}+E_{\boldsymbol{r}},
\end{align}
where
\begin{align*}
&E_{\mathbb{D},1} =        -\int_{\partial K} \left(\mathbb{D}\nabla (u-u_{I})\right)\cdot\boldsymbol{n}
                           (w_{1,K}-\Pi_{h}^{k,*}w_{1,K})\ud s,\\
&E_{\mathbb{D},2} =        \int_{K}\nabla\cdot\left(\mathbb{D}\nabla (u-u_{I})\right)
                           (w_{1,K}-\Pi_{h}^{k,*}w_{1,K})\ud x \ud y,\\
&E_{\mathbb{Q}} =        -\int_{K} \left( q_{1}\frac{\partial (u-u_{I})}{\partial x} + q_{2}\frac{\partial (u-u_{I})}{\partial y} \right)
                           (w_{1,K}-\Pi_{h}^{k,*}w_{1,K})\ud x \ud y,\\
&E_{\boldsymbol{r}} =     -\int_{K}\boldsymbol{r}(u-u_{I})
                           (w_{1,K}-\Pi_{h}^{k,*}w_{1,K})\ud x \ud y.
\end{align*}

For $E_{\boldsymbol{r}}$, for the cases of $k-1 \le r \le k$, the Cauchy's inequality together with (\ref{eq:u_uI_L2}) derive
\begin{align}\label{eq:E2R1}
|E_{\boldsymbol{r}}| \le C h^{k+2} \|u\|_{k+1,K} \, |w_{1,K}|_{1,K}.
\end{align}
For the cases of $k+1 \le r \le 2k-2$, inserting $\tilde{\boldsymbol{r}} := \Pi_{h}^{r-k-1}\boldsymbol{r}$, such that $\tilde{\boldsymbol{r}}u_{I} \in Q^{r-1}(K)$. Then, combining with (\ref{eq:orth_condition1}), one has
\begin{align*}
E_{\boldsymbol{r}}=   &-\int_{K}(\boldsymbol{r}-\tilde{\boldsymbol{r}})(u-u_{I})
               (w_{1,K}-\Pi_{h}^{k,*}w_{1,K})\ud x \ud y
             \\
           &
        -\int_{K}\left(\tilde{\boldsymbol{r}}u-\Pi_{h}^{r}(\tilde{\boldsymbol{r}}u)\right)
               (w_{1,K}-\Pi_{h}^{k,*}w_{1,K})\ud x \ud y,
\end{align*}
which together with (\ref{eq:u_uI_L2}) leads to
\begin{align}\label{eq:E2R2}
|E_{\boldsymbol{r}}| \le C h^{r+2} \|u\|_{r+1,K} |w_{1,K}|_{1,K}.
\end{align}

For $E_{\mathbb{Q}}$, for the case of $r = k-1$, the Cauchy's inequality together with (\ref{eq:u_uI_H1}) derive
\begin{align}\label{eq:E2Q1_1}
|E_{\mathbb{Q}}| \le C h^{k+2} \|u\|_{k+1,K} |w_{1,K}|_{1,K}.
\end{align}
For the case $k \leq r \leq 2k - 2$, construct interpolants $\tilde{q}_1 \in Q^{(r-k), (r-k-1)}([x_{i-1}, x_i] \times [y_{j-1}, y_j])$ and $\tilde{q}_2 \in Q^{(r-k-1), (r-k)}([x_{i-1}, x_i] \times [y_{j-1}, y_j])$ such that $\displaystyle\tilde{q}_1 \frac{\partial u_I}{\partial x}, \displaystyle\tilde{q}_2 \frac{\partial u_I}{\partial y} \in Q^r(K)$. Here, $Q^{(r-k), (r-k-1)}$ denotes the bivariate polynomial space of degree at most $(r-k)$ in $x$ and $(r-k-1)$ in $y$. Then, similar arguments with the proof of (\ref{eq:E2R2}) derive
\begin{align}\label{eq:E2Q1_2}
|E_{\mathbb{Q}}| \le C h^{r+2} \|u\|_{r+2,K} |w_{1,K}|_{1,K}.
\end{align}

For the diffusion-related terms $E_{\mathbb{D},1}$ and $E_{\mathbb{D},2}$, we observe cancellation properties. This motivates us to consider these two terms together in the analysis. Denoting $K:=[x_{i-1},\,x_{i}]\times[y_{j-1},\,y_{j}]$ and $P_{i,j}:=(x_{i},y_{j})$, one has the following expression
\begin{align}\label{eq:w_expression}
w_{1,K} = w_{00}+w_{01}(y-y_{j-1})+w_{10}(x-x_{i-1}),
\end{align}
where, $w_{00}$=$w_{1,K}(P_{i-1,j-1})$, $w_{01}$=$w_{1,K}|^{P_{i-1,j}}_{P_{i-1,j-1}}$, and $w_{10}$=$w_{1,K}|^{P_{i,j-1}}_{P_{i-1,j-1}}$.
Substitute this expression into $E_{\mathbb{D},1}$. Notice that $y-\Pi_{h}^{k,*}y \equiv 0$ on edges $\overline{P_{i-1,j-1}P_{i,j-1}}$ and $\overline{P_{i-1,j}P_{i,j}}$, $x-\Pi_{h}^{k,*}x \equiv 0$ on edges $\overline{P_{i-1,j-1}P_{i-1,j}}$ and $\overline{P_{i,j-1}P_{i,j}}$, and $w_{00}\equiv\Pi_{h}^{k,*}w_{00}$ on $K$. Then, with integration by parts, we arrive at
\begin{align*}   
E_{\mathbb{D},1} = &-   \int_{y_{j-1}}^{y_{j}}w_{01}
                        \left( d_{11}\frac{\partial (u-u_{I})}{\partial x} +d_{12}\frac{\partial (u-u_{I})}{\partial y}\right)\Big|_{(x_{i-1},y)}^{(x_{i},y)}
                             (y-\Pi_{h}^{k,*}y)\ud y    \nonumber \\
                   &-   \int_{x_{i-1}}^{x_{i}}w_{10}
                        \left( d_{21}\frac{\partial (u-u_{I})}{\partial x}
                        +d_{22}\frac{\partial (u-u_{I})}{\partial y}\right)\Big|_{(x,y_{j-1})}^{(x,y_{j})}
                            (x-\Pi_{h}^{k,*}x)\ud x     \nonumber \\
                 = & -\int_{K}w_{01}
                        \frac{\partial}{\partial x}\left( d_{11}\frac{\partial (u-u_{I})}{\partial x}
                                                 +d_{12}\frac{\partial (u-u_{I})}{\partial y}\right)
                        (y-\Pi_{h}^{k,*}y)\ud x\ud y  \nonumber\\
                   & -\int_{K}w_{10}
                    \frac{\partial}{\partial y}\left( d_{21}\frac{\partial (u-u_{I})}{\partial x}
                                                 +d_{22}\frac{\partial (u-u_{I})}{\partial y}\right)
                        (x-\Pi_{h}^{k,*}x)\ud x\ud y.
\end{align*}
Combining this with the substitution of (\ref{eq:w_expression}) into $E_{\mathbb{D},2}$, the expansion of differential operators in $E_{\mathbb{D},2}$, and the application of cancellation properties, one obtains
\begin{align*}
E_{\mathbb{D},1}+E_{\mathbb{D},2} =& \int_{K}  w_{01}  \frac{\partial}{\partial y}\Big( d_{21}\frac{\partial (u-u_{I})}{\partial x}
                                              +d_{22}\frac{\partial (u-u_{I})}{\partial y}\Big)
                                                (y-\Pi_{h}^{k,*}y)\ud x \ud y    \nonumber\\
                          &+ \int_{K}  w_{10}  \frac{\partial}{\partial x}\Big( d_{11}\frac{\partial (u-u_{I})}{\partial x}
                                              +d_{12}\frac{\partial (u-u_{I})}{\partial y}\Big)
                                                (x-\Pi_{h}^{k,*}x)\ud x \ud y.
\end{align*}
Recalling (\ref{eq:orth_conditionx})--(\ref{eq:orth_conditiony}), and following similar arguments to those in the proofs of (\ref{eq:E2R2}) and (\ref{eq:E2Q1_2}), we interpolate the coefficients $d_{ij}$ ($i,j=1,2$) and obtain the estimate for
\begin{align}\label{eq:E2D}
|E_{\mathbb{D},1}+E_{\mathbb{D},2}|\le \mathit{C} h^{r+2} \|u\|_{r+3,K} |w_{1,K}|_{1,K}.
\end{align}

Then, (\ref{eq:E2})-(\ref{eq:E2D}) derive the estimate (\ref{eq:Estimation for E2}), which completes the proof.
\end{proof}

\section{Ultraconvergence of derivatives}
\label{sec:Ultraconvergence}
Because the proofs of function-value superconvergence and derivative ultraconvergence are similar, we establish derivative ultraconvergence in this section and defer the superconvergence results for both derivatives and function values to the next section.

We first derive the global ultraconvergence property in the $H^1$ norm in Theorem~\ref{thm:Ultra-super_H1 2D}. Combining this with the superclose estimates between $u$ and $u_{I,Ultra}$ from Lemma~\ref{lem:u_uI_normU}, we obtain the \textbf{natural derivative ultraconvergence} results
\begin{subequations}
\begin{align}
    \label{eq:uh_uI_H1_xUdisc}
 \Big|\frac{\partial }{\partial x} (u - u_{h})\Big|
     = \mathnormal{O}(h^{k+2}), \quad \forall x\in \{\alpha_{m,K}^x\}_{m\in\mathbb{Z}_k},\,\,  \forall   y\in \mathbb{P}_{y,K}^{\mathrm{S}}, \\
    \label{eq:uh_uI_H1_yUdisc}
 \Big|\frac{\partial }{\partial y} (u - u_{h})\Big|
    = \mathnormal{O}(h^{k+2}), \quad \forall x\in \mathbb{P}_{x,K}^{\mathrm{S}},\,\,  \forall  y\in \{\alpha_{m,K}^y\}_{m\in\mathbb{Z}_k},
\end{align}
\end{subequations}
for all $K\in\mathcal{T}_{h}$.
\begin{theorem}[Global ultraconvergence in the $H^{1}$ norm]  \label{thm:Ultra-super_H1 2D}
Given a regular rectangular mesh $\mathcal{T}_{h}$ and a dual mesh $\mathcal{T}_{h}^{*}$ satisfying the tensorial $k$-$k$-order orthogonality condition, suppose $\mathbb{D}=\mathrm{diag}(d_{11},d_{22})$ and $\mathbb{Q}\equiv(0,0)^{T}$ in (\ref{eq:BVP}). Let $u\in H^{1}_{0}(\Omega)\cap H^{k+3}(\Omega)$ be the exact solution of (\ref{eq:BVP}), $u_h\in U_{h}^{k}$ be the solution of the FVE scheme (\ref{eq:FVEscheme}), and $u_{I,Ultra} \in U_{h}^{k}$ be the supercolse function satisfying the AMD-Ultra constraints (Definition~\ref{def:UMD 2D}). Then, there holds the weak estimate of the first type
\begin{align} \label{eq:WeakestimateU}
|a_{h}(u-u_{I,Ultra}, \Pi_{h}^{k,*}w_{h})| \leq \mathit{C}h^{k+2} \|u\|_{k+3} \|w_{h}\|_{\mathcal{T}_h},
                                                          \quad \forall\, w_{h}\in U_{h}^{k},
\end{align}
as well as the \textbf{global ultraconvergence in the $H^1$ norm}
\begin{align} \label{eq:SuperconvU}
\| u_{h}-u_{I,Ultra}\|_{1}\leq \mathit{C}h^{k+2} \|u\|_{k+3}.
\end{align}
\end{theorem}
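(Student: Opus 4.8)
The plan is to obtain the energy bound (\ref{eq:SuperconvU}) from the weak estimate (\ref{eq:WeakestimateU}) by a short inf-sup argument, and to spend the real effort on establishing (\ref{eq:WeakestimateU}) itself.

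First I would reduce (\ref{eq:SuperconvU}) to (\ref{eq:WeakestimateU}). Choosing $w_{h}=u_{h}-u_{I,Ultra}\in U_{h}^{k}$ in the inf-sup condition of Lemma~\ref{lem:Inf-sup condition} yields a $v_{h}\in U_{h}^{k}$ with $c_{0}\|u_{h}-u_{I,Ultra}\|_{1}\|v_{h}\|_{1}\le a_{h}(u_{h}-u_{I,Ultra},\Pi_{h}^{k,*}v_{h})$. The decisive point is a Galerkin orthogonality: integrating (\ref{eq:BVP}) over each dual cell and applying the divergence theorem shows that the exact solution satisfies $a_{h}(u,\Pi_{h}^{k,*}v_{h})=(f,\Pi_{h}^{k,*}v_{h})$, which by the scheme (\ref{eq:FVEscheme}) equals $a_{h}(u_{h},\Pi_{h}^{k,*}v_{h})$; hence $a_{h}(u_{h}-u,\Pi_{h}^{k,*}v_{h})=0$ and the numerator collapses to $a_{h}(u-u_{I,Ultra},\Pi_{h}^{k,*}v_{h})$. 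Applying (\ref{eq:WeakestimateU}), the comparability $\|v_{h}\|_{\mathcal{T}_{h}}\lesssim\|v_{h}\|_{1}$ on $U_{h}^{k}$, and dividing by $\|v_{h}\|_{1}$ then gives (\ref{eq:SuperconvU}).

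The substance lies in the weak estimate. I would expand $a_{h}(u-u_{I,Ultra},\Pi_{h}^{k,*}w_{h})=\sum_{K}a_{h}^{K}(u-u_{I,Ultra},\Pi_{h}^{k,*}w_{h})$ and, on each $K$, compare with the finite element form (\ref{eq:FEM}). The difference $a_{h}^{K}-a^{K}$ is handled by splitting $w_{h}=\Pi_{K}^{1}w_{h}+(w_{h}-\Pi_{K}^{1}w_{h})$: the $P^{1}$ part is bounded by Lemma~\ref{lem:Estimation_for_E2} with $r=k$, giving $\mathcal{O}(h^{k+2})\|u\|_{k+3,K}\,|w_{h}|_{1,K}$, while the remainder is controlled through (\ref{eq:w1_norm}); here the remark's observation that (\ref{eq:w1_pointy}) renders $\Pi_{h}^{k,*}(\Pi_{K}^{1}w_{h}(x_{i},\cdot)-\Pi_{K}^{1}w_{h}(x_{i-1},\cdot))$ constant is what makes the interior boundary-flux part tractable. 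It is precisely the diagonal hypothesis $\mathbb{D}=\mathrm{diag}(d_{11},d_{22})$ together with $\mathbb{Q}\equiv 0$ that let the remaining, genuinely superconvergent contributions split into a pure $x$-flux governed by $\partial_{x}(u-u_{I,Ultra})$ and a pure $y$-flux governed by $\partial_{y}(u-u_{I,Ultra})$, since an off-diagonal $d_{12}$ or a convection term would destroy the alignment with the dual mesh exploited below.

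For these flux contributions I would substitute the M-decomposition (\ref{eq:u-uI_res_U}). Because the interior dual-cell interfaces are located at the dual points, the vanishing relations (\ref{eq:Rx_alpha_U}) and the endpoint relations (\ref{eq:M-functions_xpointU}), together with the M-orthogonality (\ref{eq:M-functions properties}) and the tensorial $k$-$k$-order orthogonality (through Lemma~\ref{lem:The equivalence of the orth condition}), annihilate the leading element-local contributions of $R^{\mathrm{S}}_{x,K}$, $R^{\mathrm{U}}_{x,K}$ and their $y$-analogues, while the reaction integrals are reduced using the zero-average identities of Lemma~\ref{lem:M_function}. I expect the principal obstacle to be the leftover pieces carried by the cross terms $(R^{\mathrm{S},1}_{x,K}+R^{\mathrm{U},1}_{x,K})M^{y}_{1}$ and $(R^{\mathrm{S},1}_{y,K}+R^{\mathrm{U},1}_{y,K})M^{x}_{1}$, and by the unbalanced fluxes at primary-element interfaces, which do not cancel element by element and, summed crudely over the $\mathcal{O}(h^{-2})$ elements, fall short of $\mathcal{O}(h^{k+2})$. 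Recovering the missing orders is intrinsically global: one must carry out a discrete summation by parts in which the interface terms telescope because $w_{h}$ has a single-valued trace across shared edges and $M_{1}$ reverses sign there, while the neighbouring M-coefficients match to the requisite order by Lemma~\ref{lem:M_function_Coeff}. This inter-element cancellation, which couples the $x$- and $y$-directions, is exactly the two-dimensional effect with no one-dimensional counterpart, and is the step I would expect to require the most care.
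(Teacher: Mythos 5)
Your proposal follows essentially the same route as the paper's proof: the inf-sup reduction via Galerkin orthogonality, the elementwise three-way splitting into the FVE/FEM consistency error (Lemma~\ref{lem:Estimation_for_E2} with $r=k$), the dual-interface flux terms where (\ref{eq:Rx_alpha_S})--(\ref{eq:Rx_alpha_U}) leave only the $R^{\mathrm{S},1}_{y,K}M^x_1$ contribution and (\ref{eq:w1_pointy}) enables the telescoping, and the inter-element cancellation driven by Lemma~\ref{lem:M_function_Coeff} together with $w_h|_{\partial\Omega}=0$ and Lemma~\ref{lem:M_function}. The plan is sound and identifies the same key mechanisms, including the correct diagnosis that the surviving cross terms and the primary-interface flux imbalance are the delicate part.
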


The global continuity of $u_{I,Ultra} \in U_{h}^{k}$ follows directly from its construction in Definition~\ref{def:UMD 2D}. Then, the ultraconvergence result (\ref{eq:SuperconvU}) follows from the weak estimate (\ref{eq:WeakestimateU}) and Lemma~\ref{lem:Inf-sup condition} through
\begin{align*}
\|u_h-u_{I,Ultra}\|_1 & \leq \frac{1}{c_0} \, \sup_{w_{h}\in U_{h}^{k}}
                         \frac{a_h(u_h-u_{I,Ultra},\Pi_{h}^{k,*}w_{h})} {\|w_{h}\|_{1}}
                       \leq C h^{k+2}\|u\|_{k+3}.
\end{align*}

In the following analysis for (\ref{eq:WeakestimateU}), by the local nature of its estimation, the continuity of $u_{I,Ultra}$ is not required. The terms $R^{\mathrm{U},1}_{x,K}M^y_{1}$, $R^{\mathrm{U},1}_{y,K}M^x_{1}$, and $R^{\mathrm{U}}_{res}$ in (\ref{eq:u-uI_res_U}) represent higher-order residuals  ($O(h^{k+3})$), and their proofs are trivial. Thus, in the derivation of (\ref{eq:WeakestimateU}), we express $u - u_{I,Ultra}$ in the following form, omitting these higher-order terms.
\begin{align}\label{eq:u_uI_simplifyU}
u-u_{I,Ultra} =  R^{\mathrm{S}}_{x,K} +  R^{\mathrm{U}}_{x,K}+R^{\mathrm{S}}_{y,K}+R^{\mathrm{U}}_{y,K}+ R^{\mathrm{S},1}_{x,K} M^y_{1}  +   R^{\mathrm{S},1}_{y,K} M^x_{1}.
\end{align}

\begin{proof}
For any $w_{h}\in U_{h}^{k}$, the left-hand side of (\ref{eq:WeakestimateU}) decomposes into three parts.
\begin{align}\label{eq:Weak_estimate}
a_{h}(u-u_{I,Ultra}, \Pi_{h}^{k,*}w_{h}) = \sum_{K\in\mathcal{T}_{h}} (E_{1}+E_{2}+E_{3}),
\end{align}
where
\begin{align*}
E_{1} &= a^{K}_{h}(u-u_{I,Ultra}, \Pi_{h}^{k,*}(w_{h}-\Pi_{K}^{1}w_{h})),   \\
E_{2} &= a^{K}(u-u_{I,Ultra}, \Pi_{K}^{1}w_{h}),\\
E_{3} &= a^{K}_{h}(u-u_{I,Ultra}, \Pi_{h}^{k,*}(\Pi_{K}^{1}w_{h}))
           -a^{K}(u-u_I, \Pi_{K}^{1}w_{h}).
\end{align*}
Here, $a^{K}_{h}(\cdot,\cdot)$ and $a^{K}(\cdot,\cdot)$ are elementwise bilinear forms of FVEM (\ref{eq:ah_K}) and FEM (\ref{eq:FEM}). The estimate of $E_{3}$ can be referred to Lemma~\ref{lem:Estimation_for_E2} ($r=k$). Following we present the estimate of $E_{1}$ and $E_{2}$.

\textbf{Part I.} For $E_{1}$, by (\ref{eq:ah_K}), it contains three parts that $E_{1}= E_{11}+E_{12}+E_{1\boldsymbol{r}}$ with
\begin{align*}
&E_{11} =  \sum_{ K^{*} \in \mathcal{T}_{h}^{*} }
          -\int_{\partial K^{*}\cap K}d_{11}\frac{\partial (u-u_{I,Ultra})}{\partial x}\Pi_{h}^{k,*}(w_{h}-\Pi_{K}^{1}w_{h})\ud y,\\
&E_{12} =  \sum_{ K^{*} \in \mathcal{T}_{h}^{*} }
          \int_{\partial K^{*}\cap K}d_{22}\frac{\partial (u-u_{I,Ultra})}{\partial y}\Pi_{h}^{k,*}(w_{h}-\Pi_{K}^{1}w_{h})\ud x,\\
&E_{1\boldsymbol{r}} =  \sum_{ K^{*} \in \mathcal{T}_{h}^{*} }
          \int_{K^{*}\cap K}\boldsymbol{r} (u-u_{I,Ultra})\Pi_{h}^{k,*}(w_{h}-\Pi_{K}^{1}w_{h})\ud x \ud y.
\end{align*}
The estimation of $E_{1\boldsymbol{r}}$ can be simply derived by (\ref{eq:w1_norm}) and Lemma~\ref{lem:u_uI_normU} that
\begin{align}\label{eq:E1R_U}
|\sum_{K\in\mathcal{T}_{h}} E_{1\boldsymbol{r}}|\lesssim  h^{k+2}\,||u||_{k+1}\|w_{h}\|_{1}.
\end{align}

The structures of $E_{11}$ and $E_{12}$ are symmetric. Thus, we estimate $E_{11}$ below, with similar results applying to $E_{12}$.
Recall the properties (\ref{eq:Rx_alpha_S}) and (\ref{eq:Rx_alpha_U}), and note that $R^{\mathrm{S}}_{y,K}$ and $R^{\mathrm{U}}_{y,K}$ depend only on $y$. From (\ref{eq:u_uI_simplifyU}), we obtain
\begin{align*}
\frac{\partial (u-u_{I,Ultra})}{\partial x}(\alpha^{x}_{s,K},y)
= \frac{\partial (R^{\mathrm{S},1}_{y,K} M^x_{1})}{\partial x}(\alpha^{x}_{s,K},y)
=\frac{2}{h_{i}^x}  \,  R^{\mathrm{S},1}_{y,K}, \quad \forall s\in\mathbb{Z}_{k}.
\end{align*}
Recalling (\ref{eq:w1_pointy}) and Lemma~\ref{lem:M_function}, since $E_{11}$ consists of line integrals over $x=\alpha^{x}_{s,K}$ ($s\in\mathbb{Z}_{k}$), it can be expressed as
\begin{align}\label{eq:E11_U}
E_{11}=& \sum_{ K^{*} \in \mathcal{T}_{h}^{*} }
   -\int_{\partial K^{*}\cap K}  d_{11}\frac{2}{h_{i}^x}\,R^{\mathrm{S},1}_{y,K}\,\Pi_{h}^{k,*}(w_{h}-\Pi_{K}^{1}w_{h})\ud y      \nonumber   \\
     =& E_{11}^{res,0}
         +  \frac{2\,\bar{d}^{K}_{11}}{h_{i}^x}  \int^{y_{j}}_{y_{j-1}}
                   \,R^{\mathrm{S},1}_{y,K}    \sum_{s=1}^{k} \,
                   \left(\Pi_{h}^{k,*}(w_{h}-\Pi_{K}^{1}w_{h})\right)\Big|_{(a^{x}_{s-1,K},y)}^{(a^{x}_{s,K},y)}\ud y   \nonumber\\
     =&  E_{11}^{res,0}
         +  \frac{2\,\bar{d}^{K}_{11}}{h_{i}^x}  \int^{y_{j}}_{y_{j-1}}
                   \,R^{\mathrm{S},1}_{y,K}     \,
                    \Pi_{h}^{k,*}\left((w_{h}-\Pi_{K}^{1}w_{h})\big|_{(x_{i-1},y)}^{(x_{i},y)}\right)\ud y      \nonumber\\
     =&  E_{11}^{res,0}
         +  \frac{2\,\bar{d}^{K}_{11}}{h_{i}^x}  \int^{y_{j}}_{y_{j-1}}
                   \,R^{\mathrm{S},1}_{y,K}     \,
                    \left(\Pi_{h}^{k,*}w_{h}(x_{i},y) - \Pi_{h}^{k,*}w_{h}(x_{i-1},y) \right)\ud y      \nonumber\\
     =&  E_{11}^{res,0}+E_{11}^{res,r}+E_{11}^{res,l}+E_{11}^{r}+E_{11}^{l}.
\end{align}
where, $\displaystyle\bar{d}^{K}_{11}:=\Pi_{K}^{0}d_{11}$, $\displaystyle\bar{w}_{h}^{x_{i}}:=\frac{1}{h_{j}^{y}}\int_{y_{j-1}}^{y_{j}} w_{h}(x_{i},y) \ud y$, $\displaystyle\bar{d}_{11}^{x_{i}}:=\frac{1}{h_{j}^{y}}\int_{y_{j-1}}^{y_{j}} d_{11}(x_{i},y) \ud y$, and
\begin{align*}
E_{11}^{res,0}=&-\sum_{ K^{*} \in \mathcal{T}_{h}^{*} }
                   \int_{\partial K^{*}\cap K}(d_{11}-\bar{d}^{K}_{11})\frac{2}{h_{i}^x}\,R^{\mathrm{S},1}_{y,K}   \,
                   \Pi_{h}^{k,*}(w_{h}-\Pi_{K}^{1}w_{h})\ud y ,     \nonumber\\
E_{11}^{res,r} =&
           \frac{   2\,(\bar{d}^{K}_{11}-\bar{d}_{11}^{x_{i}}) }{h^{x}_{i}}\int^{y_{j}}_{y_{j-1}}   R^{\mathrm{S},1}_{y,K}  \,
           \Pi_{h}^{k,*}\Big(w_{h}(x_{i},y)-w^{x_{i}}_{0}\Big)\ud y,\\
E_{11}^{res,l} =&
           -\frac{2(\bar{d}^{K}_{11}-\bar{d}_{11}^{x_{i-1}})}{h^{x}_{i}}\int^{y_{j}}_{y_{j-1}}   R^{\mathrm{S},1}_{y,K}\,
           \Pi_{h}^{k,*}\Big(w_{h}(x_{i-1},y)-w^{x_{i-1}}_{0}\Big)\ud y,\\
E_{11}^{r} =&
                2\,\bar{d}_{11}^{x_{i}}\int^{y_{j}}_{y_{j-1}}    \frac{ R^{\mathrm{S},1}_{y,K} }{h^{x}_{i}}\,
           \Pi_{h}^{k,*}\Big(w_{h}(x_{i},y)-w^{x_{i}}_{0}\Big)\ud y,    \\
E_{11}^{l} =&
           -2\,\bar{d}_{11}^{x_{i-1}}  \int^{y_{j}}_{y_{j-1}}  \frac{ R^{\mathrm{S},1}_{y,K}}{h^{x}_{i}} \,
           \Pi_{h}^{k,*}\Big(w_{h}(x_{i-1},y)-w^{x_{i-1}}_{0}\Big)\ud y.
\end{align*}

By the trace theorem, the first three terms $E_{11}^{res,0}$, $E_{11}^{res,r}$, and $E_{11}^{res,l}$ in (\ref{eq:E11_U}) are estimated by
\begin{align}
\label{eq:E11U_1y_0}
\Big| \sum_{K\in\mathcal{T}_{h}}(E_{11}^{res,0}+E_{11}^{res,r}+E_{11}^{res,l}) \Big|\lesssim  h^{k+2}\,||u||_{k+3}\|w_{h}\|_{1}.
\end{align}

As for $E_{11}^{r}$ and $E_{11}^{l}$, recall the expression $R^{\mathrm{S},1}_{y,K} = \sum^{k}_{t=2} b^{*}_{1,t}M^y_{t}  +  b_{1,k+1}M^y_{k+1}$ given by (\ref{eq:u-uI res_S}). Notice the fact that, for any $j\in \mathbb{Z}_{N_{y}}$ and for any $y\in[y_{j-1},\,y_{j}]$,
\begin{align*}
M^{y,K_{i_{1},j}}_{t}(x^{a},y)=M^{y,K_{i_{2},j}}_{t}(x^{b},y),\quad \forall x^{a}\in[x_{i_{1}-1},\,x_{i_{1}}],\,\,\forall x^{b}\in[x_{i_{2}-1},\,x_{i_{2}}].
\end{align*}
Recalling (\ref{eq:M_function_x_Coeff}) in Lemma~\ref{lem:M_function_Coeff} and (\ref{eq:Rx_alpha_S})-(\ref{eq:M-functions_xpointS}), one has
\begin{align*}
\frac{b^{*,K_{i,j}}_{1,t}}{h^{x}_{i}}-\frac{b^{*,K_{i+1,j}}_{1,t}}{h^{x}_{i+1}} = O   \left( \frac{b^{K_{i,j}}_{1,k+1}}{h^{x}_{i}}-\frac{b^{K_{i+1,j}}_{1,k+1}}{h^{x}_{i+1}} \right) = \mathnormal{O}(h^{k+2})
,\quad  t=2,3,\dots,k.
\end{align*}
Combining with $w_{h}|_{\partial \Omega} = 0$ ($w_{h}\in U_{h}^{k}$), we have
\begin{align}\label{eq:elr1U}
\Big| \sum_{K_{i,j}\in\mathcal{T}_{h}} (E_{11}^{r}+E_{11}^{l})  \Big|
= & \Big| \sum_{j=1}^{N_y}  \sum_{i=1}^{N_x-1} 2\,\bar{d}_{11}^{x_{i}}
       \int^{y_{j}}_{y_{j-1}}\Big[
       \sum^{k}_{t=2}\Big(  \frac{b^{*,K_{i,j}}_{1,t}}{h^{x}_{i}}-\frac{b^{*,K_{i+1,j}}_{1,t}}{h^{x}_{i+1}}  \Big)M^y_{t}   \nonumber\\
  & \,\,\,     +\Big(  \frac{b^{K_{i,j}}_{1,k+1}}{h^{x}_{i}}-\frac{b^{K_{i+1,j}}_{1,k+1}}{h^{x}_{i+1}}  \Big) M^y_{k+1} \Big] \,
        \Pi_{h}^{k,*}\big(w_{h}(x_{i},y)-w^{x_{i}}_{0}\big)\ud y    \Big|   \nonumber\\
\lesssim&  h^{k+2}\,||u||_{k+3}\|w_{h}\|_{1},
\end{align}
which together with (\ref{eq:E11_U})-(\ref{eq:E11U_1y_0}) derive
\begin{align}\label{eq:E11U_estimate}
|E_{11}|\lesssim h^{k+2}\,||u||_{k+3}\|w_{h}\|_{1}.
\end{align}

Similarly,
\begin{align}\label{eq:E12U_estimate}
|E_{12}|\lesssim h^{k+2}\,||u||_{k+3}\|w_{h}\|_{1}.
\end{align}

Then, one arrives at the estimate of $E_{1}$.
\begin{align} \label{eq:E1_U_estimation}
\Big|\sum_{K\in\mathcal{T}_h} E_{1} \Big| = \Big| \sum_{K\in\mathcal{T}_h}(E_{11}+E_{12}+E_{1\boldsymbol{r}} ) \Big|
\lesssim \mathit{C}h^{k+2} \|u\|_{k+3} \|w_{h}\|_{1}.
\end{align}

\textbf{Part II.} For $E_2$, it also contains three parts that $E_{2}= E_{21}+E_{22}+E_{2\boldsymbol{r}}$ with
\begin{align*}
&E_{21} =
                           \int_{K}d_{11}\frac{\partial (u-u_{I,Ultra})}{\partial x}
                           \frac{\partial \, \Pi_{K}^{1}w_{h}}{\partial x}\ud x \ud y,\\
&E_{22} =
                           \int_{K}d_{22}\frac{\partial (u-u_{I,Ultra})}{\partial y}
                           \frac{\partial \, \Pi_{K}^{1}w_{h} }{\partial y}\ud x \ud y,\\
&E_{2\boldsymbol{r}}             = \int_{K}\boldsymbol{r}(u-u_{I,Ultra})\Pi_{K}^{1}w_{h}\ud x \ud y.
\end{align*}

For $E_{21}$, notice that $\displaystyle \frac{\partial\, \Pi_{K}^{1}w_{h}}{\partial x}= \frac{w_{h}(x_{i},y_{j-1})-w_{h}(x_{i-1},y_{j-1})}{h_{i}^{x}}$ is a constant on $K$.
Recall the properties (\ref{eq:M-functions_xpointS}), (\ref{eq:M-functions_xpointU}), (\ref{eq:u_uI_simplifyU}), and note that $R^{\mathrm{S}}_{y,K}$ and $R^{\mathrm{U}}_{y,K}$ depend only on $y$.
By Lemma~\ref{lem:M_function} and the integration by parts in $x$-direction, one obtains
\begin{align}\label{eq:E3D11_U_estimation}
\Big| \sum_{K\in\mathcal{T}_{h}}  E_{21} \Big|
                                   =& \Big| \sum_{K\in\mathcal{T}_{h}} \frac{\partial\, \Pi_{K}^{1}w_{h}}{\partial x}   \Big[
                                       \int_{K}d_{11}
                                       \frac{\partial (R^{\mathrm{S}}_{x,K} +  R^{\mathrm{U}}_{x,K}+ R^{\mathrm{S},1}_{x,K} M^y_{1})}{\partial x}  \ud x \ud y  \nonumber\\
                                     &  \qquad\qquad       +    \int_{K}d_{11}\frac{\partial (R^{\mathrm{S},1}_{y,K} M^x_{1}) }{\partial x}     \ud x \ud y \Big]    \Big| \nonumber  \\
                                   =& \Big| \sum_{K\in\mathcal{T}_{h}} \frac{\partial\, \Pi_{K}^{1}w_{h}}{\partial x}   \Big[
                                       \int_{K}(R^{\mathrm{S}}_{x,K} +  R^{\mathrm{U}}_{x,K}+ R^{\mathrm{S},1}_{x,K} M^y_{1})
                                       \frac{\partial d_{11}}{\partial x}  \ud x \ud y  \nonumber\\
                                    &  \qquad\qquad        +    \int_{K}d_{11}\frac{\partial  M^x_{1} }{\partial x}  R^{\mathrm{S},1}_{y,K}   \ud x \ud y \Big]    \Big| \nonumber  \\
                                  =& \Big| \sum_{K\in\mathcal{T}_{h}} \frac{\partial\, \Pi_{K}^{1}w_{h}}{\partial x}   \Big[
                                       \int_{K}R^{\mathrm{S}}_{x,K}
                                       \Big( \frac{\partial d_{11}}{\partial x} - \Pi_{K}^{0}\frac{\partial d_{11}}{\partial x}\Big)  \ud x \ud y  \nonumber\\
                                    &   +\int_{K}(R^{\mathrm{U}}_{x,K}+ R^{\mathrm{S},1}_{x,K} M^y_{1}) \frac{\partial d_{11}}{\partial x}\ud x \ud y
                                    +    \int_{K}(d_{11}-\bar{d}^{K}_{11}) \frac{\partial  M^x_{1} }{\partial x}  R^{\mathrm{S},1}_{y,K}   \ud x \ud y \Big]    \Big| \nonumber  \\
                            \lesssim & h^{k+2}\,||u||_{k+3}\|w_{h}\|_{1}.
\end{align}

Similarly, one has the estimation of $E_{22}$.
\begin{align}\label{eq:E3D22_U_estimation}
\Big|  \sum_{K\in\mathcal{T}_{h}} E_{22}  \Big| \lesssim h^{k+2}\,||u||_{k+3}\|w_{h}\|_{1}.
\end{align}

For $E_{2\boldsymbol{r}}$, by Lemma~\ref{lem:M_function} and (\ref{eq:u_uI_simplifyU}), denoting $\bar{\boldsymbol{r}}^{K}:=\Pi_{K}^{0}\boldsymbol{r}$, we arrive at
\begin{align}\label{eq:E3R_U_estimation}
\Big| \sum_{K\in\mathcal{T}_{h}} E_{2\boldsymbol{r}} \Big|
        =&  \Big| \sum_{K\in\mathcal{T}_{h}} \Big( \int_{K}\boldsymbol{r} (  R^{\mathrm{U}}_{x,K}+R^{\mathrm{U}}_{y,K}+ R^{\mathrm{S},1}_{x,K} M^y_{1}  +   R^{\mathrm{S},1}_{y,K} M^x_{1})\Pi_{K}^{1}w_{h}\ud x \ud y  \nonumber\\
         &  \qquad  + \int_{K} (\boldsymbol{r} - \bar{\boldsymbol{r}}^{K} )  (R^{\mathrm{S}}_{x,K} +R^{\mathrm{S}}_{y,K})\Pi_{K}^{1}w_{h}\ud x \ud y      \nonumber\\
         &  \qquad  + \int_{K} \bar{\boldsymbol{r}}^{K}  (R^{\mathrm{S}}_{x,K} +R^{\mathrm{S}}_{y,K}) (\Pi_{K}^{1}w_{h}-\Pi_{K}^{0}\Pi_{K}^{1}w_{h})\ud x \ud y \Big)  \Big|    \nonumber\\
 \lesssim  &  h^{k+2}\,||u||_{k+3}\|w_{h}\|_{1}.
\end{align}

Thus, we have the estimate of $E_{2}$ that
\begin{align} \label{eq:E3_U_estimation}
\Big|\sum_{K\in\mathcal{T}_h} E_{2} \Big| = \Big| \sum_{K\in\mathcal{T}_h}(E_{21}+E_{22}+E_{2\boldsymbol{r}} ) \Big|
\lesssim \mathit{C}h^{k+2} \|u\|_{k+3} \|w_{h}\|_{1},
\end{align}
which together with (\ref{eq:Weak_estimate}), (\ref{eq:E1_U_estimation}) and Lemma~\ref{lem:Estimation_for_E2} complete the proof.
\end{proof}

\section{Superconvergence of derivatives and function values}
\label{sec:Superconvergence}
This section establishes the standard one-order-higher superconvergence properties in the $H^1$ norm (Theorem~\ref{thm:super_H1_2D}) and the $L^2$ norm (Theorem~\ref{thm:super_L2_2D}). Together with the superclose estimates between $u$ and $u_{I,Super}$ from Lemma~\ref{lem:u_uI_normS}, these yield the \textbf{natural derivative superconvergence} results
\begin{subequations}
\begin{align}
    \label{eq:uh_uI_H1_xSdisc}
 \Big|\frac{\partial }{\partial x} (u - u_{h})\Big|
     = \mathnormal{O}(h^{k+1}), \quad \forall x\in \{\alpha_{m,K}^x\}_{m\in\mathbb{Z}_k},        \,\,  \forall y\in [y_{j-1},y_{j}], \\
    \label{eq:uh_uI_H1_ySdisc}
 \Big|\frac{\partial }{\partial y} (u - u_{h})\Big|
    = \mathnormal{O}(h^{k+1}), \quad \forall x\in [x_{i-1}, x_{i}],     \,\,\forall y\in \{\alpha_{m,K}^y\}_{m\in\mathbb{Z}_k},
\end{align}
\end{subequations}
and \textbf{natural function-value superconvergence} result
\begin{align}\label{eq:uh_uI_L2_Sdisc}
      |u - u_{h}| = \mathnormal{O}(h^{k+2}),\quad \forall x\in\mathbb{P}_{x,K}^{\mathrm{S}},\,\, \forall y\in \mathbb{P}_{y,K}^{\mathrm{S}},
\end{align}
for all $K\in\mathcal{T}_{h}$.

\begin{theorem}[Global superconvergence in $H^{1}$ norm]  \label{thm:super_H1_2D}
Given a regular rectangular mesh $\mathcal{T}_{h}$ and a dual mesh $\mathcal{T}_{h}^{*}$ satisfying the tensorial $k$-$(k-1)$-order orthogonality condition. Let $u\in H^{1}_{0}(\Omega)\cap H^{k+2}(\Omega)$ be the exact solution of (\ref{eq:BVP}), $u_h\in U_{h}^{k}$ be the solution of the FVE scheme (\ref{eq:FVEscheme}), and $u_{I,Super} \in U_{h}^{k}$ be the supercolse function satisfying the AMD-Super constraints (Definition~\ref{def:SMD 2D}). Then, there holds the weak estimate of the first type
\begin{equation} \label{eq:WeakestimateS}
|a_{h}(u-u_{I,Super}, \Pi_{h}^{k,*}w_{h})| \leq \mathit{C}h^{k+1} \|u\|_{k+2} \|w_{h}\|_{1},
                                                            \quad \forall\, w_{h}\in U_{h}^{k},
\end{equation}
as well as the \textbf{global superconvergence in the $H^1$ norm}
\begin{equation} \label{eq:SuperconvS}
\| u_{h}-u_{I,Super}\|_{1} \leq \mathit{C}h^{k+1} \|u\|_{k+2}.
\end{equation}
\end{theorem}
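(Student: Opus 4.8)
The plan is to mirror the two-stage argument of Theorem~\ref{thm:Ultra-super_H1 2D}, one order lower and for the full equation. The global bound (\ref{eq:SuperconvS}) is the easy stage: since $\Pi_h^{k,*}w_h\in V_h$, consistency of the FVE scheme gives $a_h(u_h-u,\Pi_h^{k,*}w_h)=0$, whence $a_h(u_h-u_{I,Super},\Pi_h^{k,*}w_h)=a_h(u-u_{I,Super},\Pi_h^{k,*}w_h)$; feeding the weak estimate (\ref{eq:WeakestimateS}) into the inf-sup condition of Lemma~\ref{lem:Inf-sup condition} yields (\ref{eq:SuperconvS}) exactly as in the display following Theorem~\ref{thm:Ultra-super_H1 2D}. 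Continuity of $u_{I,Super}\in U_h^k$ is built into Definition~\ref{def:SMD 2D}. Everything therefore reduces to the weak estimate.

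To prove (\ref{eq:WeakestimateS}) I would split, on each $K$, the test function as $w_h=\Pi_K^1 w_h+(w_h-\Pi_K^1 w_h)$ and decompose $a_h(u-u_{I,Super},\Pi_h^{k,*}w_h)=\sum_K(E_1+E_2+E_3)$ with $E_1=a_h^K(u-u_{I,Super},\Pi_h^{k,*}(w_h-\Pi_K^1 w_h))$, $E_2=a^K(u-u_{I,Super},\Pi_K^1 w_h)$, and $E_3$ the FVE--FEM discrepancy on the piecewise-linear part. The term $E_3$ is dispatched immediately by Lemma~\ref{lem:Estimation_for_E2} with $r=k-1$, which already carries the full diffusion tensor, convection, and reaction and returns $h^{k+1}\|u\|_{k+2,K}|w_h|_{1,K}$ (the hypothesis $u\in H^{k+2}$ being exactly what that lemma needs here).

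For the aligned contributions the work is lighter than in the ultraconvergence case, because the derivative superclose estimates (\ref{eq:u_uI_H1_xSdisc})--(\ref{eq:u_uI_H1_ySdisc}) hold on the entire dual lines $x=\alpha^x_{m,K}$ and $y=\alpha^y_{m,K}$ rather than at isolated points. In $E_1$ the diagonal fluxes $d_{11}\partial_x(u-u_{I,Super})$ on the vertical dual faces and $d_{22}\partial_y(u-u_{I,Super})$ on the horizontal ones are $O(h^{k+1})$ there; pairing this with $\|w_h-\Pi_K^1 w_h\|_{0,K}\lesssim h|w_h|_{1,K}$ from (\ref{eq:w1_norm}) and a trace argument gives the bound, and the reaction term is handled as $E_{1\boldsymbol{r}}$ in Theorem~\ref{thm:Ultra-super_H1 2D}. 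In $E_2$, writing $u-u_{I,Super}$ through (\ref{eq:u-uI res_S}) and pairing against the constant gradient $\nabla\Pi_K^1 w_h$, the diagonal diffusion integrates by parts and the mean-zero identities of Lemma~\ref{lem:M_function} together with the endpoint-vanishing (\ref{eq:M-functions_xpointS}) kill the leading part, so subtracting the elementwise averages $\Pi_K^0 d_{ii}$ recovers the missing order, exactly as for $E_{21}$; the reaction and convection in $E_2$ are controlled the same way, the convection using that $\int_{y_{j-1}}^{y_j}\partial_y R^{\mathrm{S}}_{y,K}\,\ud y=0$ by (\ref{eq:M-functions_xpointS}) (and its $x$-analogue), so that its leading term vanishes and only an $O(h)$ coefficient oscillation survives.

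I expect the genuine obstacle to be the off-diagonal diffusion in $E_1$, namely $d_{12}\partial_y(u-u_{I,Super})$ on the vertical faces and $d_{21}\partial_x(u-u_{I,Super})$ on the horizontal faces, which are switched off in the diagonal ultraconvergence setting. On a vertical dual face the transverse derivative $\partial_y(u-u_{I,Super})$ enjoys no pointwise superclose smallness (its leading part $\partial_y R^{\mathrm{S}}_{y,K}$ is only $O(h^k)$), so a direct Cauchy--Schwarz bound loses an order. My plan to recover it is to telescope the face fluxes over the dual cells exactly as in the $E_{11}$ computation of Theorem~\ref{thm:Ultra-super_H1 2D}, replace $d_{12}$ by its average $\Pi_K^0 d_{12}$ at the cost of an extra $h$, and then exploit that $\partial_y R^{\mathrm{S}}_{y,K}$ is mean-zero with $R^{\mathrm{S}}_{y,K}(x,y_{j-1})=R^{\mathrm{S}}_{y,K}(x,y_j)=0$, converting the face sum into a form that pairs the $O(h^{k+1})$ size of $R^{\mathrm{S}}_{y,K}$ against increments of $w_h$; summation over elements would then use $w_h|_{\partial\Omega}=0$ and the adjacent-element coefficient estimate of Lemma~\ref{lem:M_function_Coeff} to close at order $h^{k+1}$. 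Because the target is one order below the ultraconvergence case and the orthogonality demanded is only tensorial $k$-$(k-1)$, I anticipate this is the single delicate point, with all remaining estimates following the pattern established for $E_1$ and $E_2$ above.
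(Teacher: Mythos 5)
Your proposal is correct and follows essentially the same route as the paper: the identical $E_1+E_2+E_3$ splitting with Lemma~\ref{lem:Estimation_for_E2} ($r=k-1$) for $E_3$, the inf-sup closure, and for the genuinely delicate off-diagonal flux in $E_1$ the same telescoping-plus-adjacent-element cancellation combined with coefficient averaging. The only cosmetic difference is that the paper realizes the cross-element cancellation of the $O(h^k)$ leading part of $\partial_y R^{\mathrm{S}}_{y,K}$ through the continuity of $u-u_{I,Super}$ across vertical edges (leaving only the $O(h^{k+2})$ corrections $R^{\mathrm{S},1}_{y,K}$ and $O(h)$ coefficient oscillations), rather than through Lemma~\ref{lem:M_function_Coeff} itself, whose stated form concerns $b_{1,k+1}$; the $b_{0,\cdot}$ analogue you would need follows by the same argument.
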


The global superconvergence in the $H^1$ norm (\ref{eq:SuperconvS}) can be derived by (\ref{eq:WeakestimateS}) and Lemma~\ref{lem:Inf-sup condition}.
In the following analysis for (\ref{eq:WeakestimateS}), by the local nature of its estimation, the continuity of $u_{I,Super}$ is not required. The terms $R^{\mathrm{S},1}_{x,K}M^y_{1}$, $R^{\mathrm{S},1}_{y,K}M^x_{1}$, and $R^{\mathrm{S}}_{res}$ in (\ref{eq:u-uI res_S}) represent higher-order residuals ($O(h^{k+2})$), and their proofs are trivial. Thus, in the derivation of (\ref{eq:WeakestimateS}), we express $u - u_{I,Super}$ in the following form, omitting these higher-order terms.
\begin{align}\label{eq:u_uI_simplifyS}
u-u_{I,Super} =  R^{\mathrm{S}}_{x,K}   +  R^{\mathrm{S}}_{y,K}.
\end{align}

\begin{proof}
Similar with (\ref{eq:Weak_estimate}), the left-hand side of (\ref{eq:WeakestimateS}) decomposes into three parts.
\begin{align}\label{eq:Weak estimate_splitS}
a_{h}(u-u_{I,Super}, \Pi_{h}^{k,*}w_{h}) = \sum_{K\in\mathcal{T}_{h}} (E^{S}_1+E^{S}_2+E^{S}_3),
\end{align}
where
\begin{align*}
E^{\mathrm{S}}_1 &= a^{K}_{h}(u-u_{I,Super}, \Pi_{h}^{k,*}(w_{h}-\Pi_{K}^{1}w_{h})),\\
E^{\mathrm{S}}_2 &= a^{K}(u-u_{I,Super}, \Pi_{K}^{1}w_{h}),  \\
E^{\mathrm{S}}_3 &= a^{K}_{h}(u-u_{I,Super}, \Pi_{h}^{k,*} \Pi_{K}^{1}w_{h})-a^{K}(u-u_{I,Super}, \Pi_{K}^{1}w_{h}).
\end{align*}

The estimate of $E^{\mathrm{S}}_{3}$ can be referred to Lemma~\ref{lem:Estimation_for_E2} for the case of $r=k-1$.

\textbf{Part I.} For $E^{\mathrm{S}}_1$, by (\ref{eq:u_uI_simplifyS}), one has
\begin{align*}
\frac{\partial (u-u_{I,Ultra})}{\partial x}
= \frac{\partial R^{\mathrm{S}}_{x,K} }{\partial x},  \qquad
\frac{\partial (u-u_{I,Ultra})}{\partial y}
= \frac{\partial R^{\mathrm{S}}_{y,K} }{\partial y}.
\end{align*}
Then, recalling the properties (\ref{eq:w1_pointy}), (\ref{eq:Rx_alpha_S}), and Lemma~\ref{lem:M_function} we have
\begin{align}\label{eq:E1S}
E^{\mathrm{S}}_1    =&
                      \sum_{ K^{*} \in \mathcal{T}_{h}^{*} }
          -\int_{\partial K^{*}\cap K} \Big(  \bar{d}^{K}_{11}\frac{\partial R^{\mathrm{S}}_{x,K} }{\partial x} + \bar{d}^{K}_{12}\frac{\partial R^{\mathrm{S}}_{y,K} }{\partial y} \Big) \Pi_{h}^{k,*}(w_{h}-\Pi_{K}^{1}w_{h})\ud y        \nonumber \\
                    &+  \sum_{ K^{*} \in \mathcal{T}_{h}^{*} }
          \int_{\partial K^{*}\cap K} \Big(  \bar{d}^{K}_{21}\frac{\partial R^{\mathrm{S}}_{x,K} }{\partial x} + \bar{d}^{K}_{22}\frac{\partial R^{\mathrm{S}}_{y,K} }{\partial y} \Big)\Pi_{h}^{k,*}(w_{h}-\Pi_{K}^{1}w_{h})\ud x     \nonumber\\
                &+E^{\mathrm{S}}_{1,res} + E^{\mathrm{S}}_{1\boldsymbol{r}}      \nonumber\\
        =&  \int^{y_{j}}_{y_{j-1}}\,\bar{d}^{K}_{12}
                   \,\frac{\partial R^{\mathrm{S}}_{y,K} }{\partial y}    \sum_{s=1}^{k} \,
                        \left(\Pi_{h}^{k,*}(w_{h}-\Pi_{K}^{1}w_{h})\right)\Big|_{(a^{x}_{s-1,K},y)}^{(a^{x}_{s,K},y)}\ud y   \nonumber\\
            &+  \int^{x_{i}}_{x_{i-1}}\,\bar{d}^{K}_{21}
                   \,\frac{\partial R^{\mathrm{S}}_{x,K} }{\partial x}    \sum_{t=1}^{k} \,
                     \left(\Pi_{h}^{k,*}(w_{h}-\Pi_{K}^{1}w_{h})\right)\Big|_{(x,a^{y}_{t-1,K})}^{(x,a^{y}_{t,K})}\ud x
                +E^{\mathrm{S}}_{1,res} + E^{\mathrm{S}}_{1\boldsymbol{r}}      \nonumber\\
        =& E^{\mathrm{S}}_{11} + E^{\mathrm{S}}_{12}+  E^{\mathrm{S}}_{1,res} +E^{\mathrm{S}}_{1\boldsymbol{r}},
\end{align}
where
\begin{align*}
E^{\mathrm{S}}_{11}        =&  \bar{d}^{K}_{12} \int^{y_{j}}_{y_{j-1}}
                   \,\frac{\partial R^{\mathrm{S}}_{y,K} }{\partial y}     \,
                   \left(\Pi_{h}^{k,*}w_{h}\right)\Big|_{({x}_{i-1},y)}^{({x}_{i},y)}\ud y,       \\
E^{\mathrm{S}}_{12}        =&\bar{d}^{K}_{21}  \int^{x_{i}}_{x_{i-1}}
                   \,\frac{\partial R^{\mathrm{S}}_{x,K} }{\partial x}     \,
                   \left(\Pi_{h}^{k,*}w_{h}\right)\Big|_{(x,y_{j-1})}^{(x,y_{j})}\ud x,       \\
E^{\mathrm{S}}_{1,res}  =&  \sum_{ K^{*} \in \mathcal{T}_{h}^{*} }
          -\int_{\partial K^{*}\cap K} \Big( (\mathbb{D}-\Pi_{K}^{0}\mathbb{D}) \nabla(u-u_{I,Super})  \Big)\cdot \boldsymbol{n} \, \Pi_{h}^{k,*}(w_{h}-\Pi_{K}^{1}w_{h})\ud s,                    \\
E^{\mathrm{S}}_{1\boldsymbol{r}}=&  \sum_{ K^{*} \in \mathcal{T}_{h}^{*} }
          \int_{K^{*}\cap K}\Big( \mathbb{Q}\cdot\nabla (u-u_{I,Super})+ \boldsymbol{r} (u-u_{I,Super})\Big) \nonumber\\
            &\qquad\qquad \Pi_{h}^{k,*}(w_{h}-\Pi_{K}^{1}w_{h})\ud x \ud y.
\end{align*}

For $E^{\mathrm{S}}_{11}$, as $u-u_{I,Super}$ (\ref{eq:u-uI res_S}) is continuous across $\overline{P_{i,j-1}P_{i,j}}$, it takes the same value on this segment, the right boundary of $K_{i,j}$, and the left boundary of $K_{i+1,j}$, for all $j\in\mathbb{Z}_{N_{y}}$ and $i\in\mathbb{Z}_{N_{x}-1}$.
\begin{align*}
(u - u_{I,Super})|_{(x_{i},y)}= R^{\mathrm{S}}_{y,K_{i,j}} + R^{\mathrm{S},1}_{y,K_{i,j}} = R^{\mathrm{S}}_{y,K_{i+1,j}} - R^{\mathrm{S},1}_{y,K_{i+1,j}}.
\end{align*}
This property leads to a cancellation of the first term on the righthand side of the following equation that
\begin{align*}
\Big| \sum_{K\in\mathcal{T}_{h}} E^{\mathrm{S}}_{11} \Big|
         =& \Big| \sum_{K_{i,j}\in\mathcal{T}_{h}}   \int^{y_{j}}_{y_{j-1}}  \sum_{s=0,1}\Big[ \Big( (-1)^{s}\bar{d}^{x_{i-s}}_{12}
                   \,\frac{\partial (u - u_{I,Super}) }{\partial y}     \,
                        \Pi_{h}^{k,*}w_{h}  \Big)\Big|_{({x}_{i-s},y)}    \\
          & \qquad +  \Big(-\bar{d}^{x_{i-s}}_{12}
                   \,\frac{\partial R^{\mathrm{S},1}_{y,K} }{\partial y}     \,
                        \Pi_{h}^{k,*}w_{h}  \Big) \Big|_{({x}_{i-s},y)}     \\
          & \qquad +  \Big( (-1)^{s}(\bar{d}^{K}_{12}-\bar{d}^{x_{i-s}}_{12})
                   \,\frac{\partial R^{\mathrm{S}}_{y,K} }{\partial y}     \,
                        \Pi_{h}^{k,*}w_{h}  \Big)  \Big|_{({x}_{i-s},y)}  \Big] \ud y   \Big|  \\
         =& \Big| \sum_{K\in\mathcal{T}_{h}}    \int^{y_{j}}_{y_{j-1}}  \sum_{s=0,1} \Big[\Big(- \bar{d}^{x_{i-s}}_{12}
                   \,\frac{\partial R^{\mathrm{S},1}_{y,K} }{\partial y}     \,
                        \Pi_{h}^{k,*}(w_{h}-\bar{w}_{h}^{K})  \Big) \Big|_{({x}_{i-s},y)}   \\
          &  \qquad +       \Big( (-1)^{s}(\bar{d}^{K}_{12}-\bar{d}^{x_{i-s}}_{12})
                   \,\frac{\partial R^{\mathrm{S}}_{y,K} }{\partial y}     \,
                        \Pi_{h}^{k,*}(w_{h}-\bar{w}_{h}^{K})  \Big)  \Big|_{({x}_{i-s},y)}  \Big]  \ud y   \Big|  \\
          \lesssim & h^{k+1} \|u\|_{k+2} \|w_{h}\|_{1}.
\end{align*}

The estimate of $E^{\mathrm{S}}_{12}$ is quite similar. While, estimates for $E^{\mathrm{S}}_{1,res}$ and $E^{\mathrm{S}}_{1\boldsymbol{r}}$ are given by
\begin{align*}
\Big| \sum_{K\in\mathcal{T}_{h}}(E^{\mathrm{S}}_{1,res}+ E^{\mathrm{S}}_{1\boldsymbol{r}}) \Big| \lesssim h^{k+1} \|u\|_{k+2} \|w_{h}\|_{1}.
\end{align*}
which derives the estimate of $E^{\mathrm{S}}_{1}$ that
\begin{align} \label{eq:E1S_estimation}
\Big| \sum_{K\in\mathcal{T}_{h}} E^{\mathrm{S}}_{1} \Big|
         \lesssim \Big| \sum_{K\in\mathcal{T}_{h}} (E^{\mathrm{S}}_{11} + E^{\mathrm{S}}_{12}+  E^{\mathrm{S}}_{1,res} +E^{\mathrm{S}}_{1\boldsymbol{r}}) \Big|
         \lesssim  h^{k+1} \|u\|_{k+2} \|w_{h}\|_{1}.
\end{align}

\textbf{Part II.} For $E^{\mathrm{S}}_{2}$, by (\ref{eq:M-functions_xpointS}), (\ref{eq:u_uI_simplifyS}) and the integration by parts in $x$-direction or $y$-direction, one has
\begin{align}\label{eq:E3S}
\Big| \sum_{K\in\mathcal{T}_{h}} E^{\mathrm{S}}_{2} \Big|
                        =& \Big| \sum_{K\in\mathcal{T}_{h}}     \Big[
                          \int_{K}\frac{\partial R^{\mathrm{S}}_{x,K}}{\partial x}  \Big(d_{11}\frac{\partial \Pi_{K}^{1}w_{h} }{\partial x} +  d_{21}\frac{\partial \Pi_{K}^{1}w_{h} }{\partial y} +q_{1}\, \Pi_{K}^{1}w_{h} \Big)  \ud x \ud y    \nonumber \\
                         &  \qquad
                           + \int_{K}\frac{\partial R^{\mathrm{S}}_{y,K}}{\partial y}  \Big(d_{12}\frac{\partial \Pi_{K}^{1}w_{h} }{\partial x} +  d_{22}\frac{\partial \Pi_{K}^{1}w_{h} }{\partial y} +q_{2}\, \Pi_{K}^{1}w_{h} \Big)  \ud x \ud y    \nonumber \\
                         & \qquad  + \int_{K}\boldsymbol{r} (u-u_{I,Super})\, \Pi^{1}_{K}w_{h}  \ud x \ud y    \Big] \Big|          \nonumber \\
                        =& \Big| \sum_{K\in\mathcal{T}_{h}}     \Big[
                          \int_{K}R^{\mathrm{S}}_{x,K}  \frac{\partial }{\partial x} \Big(d_{11}\frac{\partial \Pi_{K}^{1}w_{h} }{\partial x} +  d_{21}\frac{\partial \Pi_{K}^{1}w_{h} }{\partial y} +q_{1}\, \Pi_{K}^{1}w_{h} \Big) \ud x \ud y    \nonumber \\
                         &  \qquad
                           + \int_{K}R^{\mathrm{S}}_{y,K} \frac{\partial }{\partial y}  \Big(d_{12}\frac{\partial \Pi_{K}^{1}w_{h} }{\partial x} +  d_{22}\frac{\partial \Pi_{K}^{1}w_{h} }{\partial y} +q_{2}\, \Pi_{K}^{1}w_{h} \Big)  \ud x \ud y    \nonumber \\
                         & \qquad  + \int_{K}\boldsymbol{r} (u-u_{I,Super})\, \Pi^{1}_{K}w_{h}  \ud x \ud y    \Big] \Big|  \nonumber \\
                \lesssim & h^{k+1} \|u\|_{k+2} \|w_{h}\|_{1} .
\end{align}

Then, we arrive at the proof of (\ref{eq:WeakestimateS}).
\end{proof}

\begin{theorem}[Global superconvergence in the $L^{2}$ norm]  \label{thm:super_L2_2D}
Given a regular rectangular mesh $\mathcal{T}_{h}$ and a dual mesh $\mathcal{T}_{h}^{*}$ satisfying the tensorial $k$-$k$-order orthogonality condition. Let $u\in H^{1}_{0}(\Omega)\cap H^{k+3}(\Omega)$ be the exact solution of (\ref{eq:BVP}), $u_h\in U_{h}^{k}$ be the solution of the FVE scheme (\ref{eq:FVEscheme}), and $u_{I,Super} \in U_{h}^{k}$ be the supercolse function satisfying the AMD-Super constraints (Definition~\ref{def:SMD 2D}). Then, there holds the \textbf{global superconvergence in the $L^2$ norm}
\begin{align}\label{eq:SuperconvL2_S}
\| u_{h}-u_{I,Super}\|_{0}\leq \mathit{C}h^{k+2} \|u\|_{k+3}.
\end{align}
\end{theorem}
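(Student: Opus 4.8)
The plan is to run an Aubin--Nitsche duality argument that upgrades the $H^1$ superconvergence of Theorem~\ref{thm:super_H1_2D} to the $L^2$ rate. Write $e_h := u_h - u_{I,Super} \in U_h^k \subset H_0^1(\Omega)$ and let $a(\cdot,\cdot) := \sum_{K}a^K(\cdot,\cdot)$ be the global FEM form from (\ref{eq:FEM}). Introduce the adjoint solution $\phi \in H_0^1(\Omega)\cap H^2(\Omega)$ defined by $a(v,\phi) = (e_h,v)$ for all $v \in H_0^1(\Omega)$; the coercivity hypothesis on the coefficients in (\ref{eq:BVP}) together with the convexity of $\Omega=(0,1)^2$ yields well-posedness and the elliptic-regularity bound $\|\phi\|_2 \lesssim \|e_h\|_0$. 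Taking $v = e_h$ gives $\|e_h\|_0^2 = a(e_h,\phi)$, so it suffices to prove $|a(e_h,\phi)| \lesssim h^{k+2}\|u\|_{k+3}\|e_h\|_0$. I fix the bi-$k$ Lagrange interpolant $\phi_I := \Pi_h^k\phi \in U_h^k$ (legitimate since $H^2(\Omega)\hookrightarrow C(\overline{\Omega})$ in two dimensions) and split $a(e_h,\phi) = a(e_h,\phi-\phi_I) + a(e_h,\phi_I)$.

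For the first term, boundedness of $a(\cdot,\cdot)$, the $H^1$ superconvergence $\|e_h\|_1 \lesssim h^{k+1}\|u\|_{k+2}$ from (\ref{eq:SuperconvS}), and the standard interpolation estimate $\|\phi-\phi_I\|_1 \lesssim h\|\phi\|_2$ give $|a(e_h,\phi-\phi_I)| \lesssim \|e_h\|_1\,\|\phi-\phi_I\|_1 \lesssim h^{k+2}\|u\|_{k+2}\,\|\phi\|_2 \lesssim h^{k+2}\|u\|_{k+2}\,\|e_h\|_0$, where the last step uses the regularity bound. For the second term I exploit Galerkin orthogonality. The exact solution satisfies the local conservation law, so $a_h(u,v_h) = (f,v_h)$ for every $v_h \in V_h$; subtracting the scheme (\ref{eq:FVEscheme}) yields $a_h(u_h-u,\Pi_h^{k,*}\phi_I) = 0$, and hence $a_h(e_h,\Pi_h^{k,*}\phi_I) = a_h(u-u_{I,Super},\Pi_h^{k,*}\phi_I)$. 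I therefore write $a(e_h,\phi_I) = \big[a(e_h,\phi_I) - a_h(e_h,\Pi_h^{k,*}\phi_I)\big] + a_h(u-u_{I,Super},\Pi_h^{k,*}\phi_I)$ and estimate the two pieces separately.

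For the bracketed FEM--FVE difference I would reuse the integration-by-parts identity from the proof of Lemma~\ref{lem:Estimation_for_E2}, after decomposing $\phi_I = \Pi_K^1\phi_I + (\phi_I-\Pi_K^1\phi_I)$. On the $P^1$ part the leading contributions pair polynomials of degree $\le k$ in $x$ (resp.\ $y$) against $x - \Pi_h^{k,*}x$ (resp.\ $y - \Pi_h^{k,*}y$) and vanish by (\ref{eq:orth_conditionx})--(\ref{eq:orth_conditiony}); the surviving terms gain one power of $h$ from the coefficient oscillation $\mathbb{D}-\Pi_K^0\mathbb{D}$ and from $x-\Pi_h^{k,*}x$, which offsets the inverse-estimate loss incurred by the second derivatives of $e_h$, while the residual $\phi_I-\Pi_K^1\phi_I$ is itself $O(h\|\phi\|_2)$ smaller. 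The convection and reaction contributions are milder (no second derivative, cancellation directly from (\ref{eq:orth_condition1})). This gives $|a(e_h,\phi_I) - a_h(e_h,\Pi_h^{k,*}\phi_I)| \lesssim h\,\|e_h\|_1\,\|\phi_I\|_1 \lesssim h^{k+2}\|u\|_{k+2}\,\|e_h\|_0$. The remaining summand $a_h(u-u_{I,Super},\Pi_h^{k,*}\phi_I)$ is treated by repeating the weak-estimate analysis of Theorems~\ref{thm:Ultra-super_H1 2D} and~\ref{thm:super_H1_2D}, but now tracking that $\phi_I$ interpolates a smooth $H^2$ function: the terms that were bounded by $\|w_h\|_1$ in the first-type estimate acquire an extra factor $h$ measured by $\|\phi\|_2$, so the order improves to $h^{k+2}\|u\|_{k+3}\,\|\phi\|_2 \lesssim h^{k+2}\|u\|_{k+3}\,\|e_h\|_0$; this is precisely where the tensorial $k$-$k$-order orthogonality (rather than $k$-$(k-1)$) and the extra regularity $u\in H^{k+3}$ enter, with Lemma~\ref{lem:Estimation_for_E2} at $r=k$ supplying the FEM--FVE conversion.

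The main obstacle is this weak estimate of the second type: showing that $a_h(u-u_{I,Super},\Pi_h^{k,*}\phi_I)$ is $O(h^{k+2})$ rather than the $O(h^{k+1})$ produced by applying (\ref{eq:WeakestimateS}) crudely with $\|\phi_I\|_1 \lesssim \|e_h\|_0$. Extracting the extra power of $h$ rests on the interplay between the stronger orthogonality and the $H^2$-smoothness of $\phi$, in the same spirit as the cross-element coefficient gains of Lemma~\ref{lem:M_function_Coeff} that drive the ultraconvergence proof. The delicate bookkeeping point is that $e_h$, being only $H^1$-superclose, must never occupy the argument slot that requires additional smoothness; accordingly the FEM--FVE difference must be organized so that the order is recovered from orthogonality-induced cancellation and coefficient oscillation, not from any (unavailable) higher regularity of $e_h$.
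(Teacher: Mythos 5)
Your high-level strategy (Aubin--Nitsche duality with $H^2$ regularity of the adjoint problem, the FVE orthogonality $a_h(u-u_h,v_h)=0$, and a splitting of the dual solution through an interpolant) matches the paper's, but your choice of interpolant reorganizes the terms so that the entire difficulty lands on a step you only sketch. The paper never inserts the full bi-$k$ interpolant of the dual solution into the FVE form; it writes
\begin{align*}
\|u_h-u_{I,Super}\|_0^2 ={}& \sum_{K} a^K\bigl(u_h-u_{I,Super},\,w-\Pi_K^1 w\bigr)
 + \sum_{K} a^K\bigl(u-u_{I,Super},\,\Pi_K^1 w\bigr)\\
&+\sum_{K}\Bigl(a_h^K\bigl(u-u_h,\Pi_h^{k,*}\Pi_K^1 w\bigr)-a^K\bigl(u-u_h,\Pi_K^1 w\bigr)\Bigr),
\end{align*}
so the FEM--FVE discrepancy is tested only against the elementwise \emph{linear} interpolant $\Pi_K^1 w$ of the dual solution, where Lemma~\ref{lem:Estimation_for_E2} with $r=k$ applies verbatim, and the function carrying the M-decomposition structure, $u-u_{I,Super}$ (not the discrete error $e_h$), sits in the FEM form against $\Pi_K^1 w$ -- exactly the $E_2$ configuration already analyzed in Part~II of Theorem~\ref{thm:Ultra-super_H1 2D}. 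The first summand is handled by (\ref{eq:SuperconvS}) together with $\|w-\Pi_K^1 w\|_1\lesssim h\|w\|_2$.

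The genuine gap in your route is the term $a_h(u-u_{I,Super},\Pi_h^{k,*}\phi_I)$: to close the argument you need a ``weak estimate of the second type'' $|a_h(u-u_{I,Super},\Pi_h^{k,*}\phi_I)|\lesssim h^{k+2}\|u\|_{k+3}\|\phi\|_2$, one order better than what (\ref{eq:WeakestimateS}) delivers, and you explicitly identify this as the main obstacle without proving it -- the claim that ``the terms bounded by $\|w_h\|_1$ acquire an extra factor $h$ measured by $\|\phi\|_2$'' would require redoing Parts~I--III of the weak-estimate analysis with the $Q^k$ test function $\phi_I$ and extracting an additional power of $h$ from the smoothness of $\phi$ in each piece, which is precisely the work the paper's arrangement avoids. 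A second, smaller gap is your bound $|a(e_h,\phi_I)-a_h(e_h,\Pi_h^{k,*}\phi_I)|\lesssim h\|e_h\|_1\|\phi_I\|_1$ for two \emph{discrete} arguments: Lemma~\ref{lem:Estimation_for_E2} requires the superclose difference $u-u_I$ in the first slot and a $P^1$ function in the second, and when $e_h$ replaces $u-u_I$ the term $\int_K\nabla\cdot(\mathbb{D}\nabla e_h)(w_{1,K}-\Pi_h^{k,*}w_{1,K})\,\ud x\ud y$ involves second derivatives of $e_h$, whose inverse-estimate cost of $h^{-1}$ must be offset by the orthogonality condition; this is plausible but is nowhere established in the paper and is not written out in your proposal. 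Until the second-type weak estimate is actually proved, the proposal does not yield (\ref{eq:SuperconvL2_S}).
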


\begin{proof}
Consider the auxiliary problem: for any $g\in L^{2}(\Omega)$, find $w \in H^{1}_{0}(\Omega)$ such that
\begin{align} \label{eq:dualproblem}
a(v,w) = (g,v),\quad \forall v\in H^{1}_{0}(\Omega).
\end{align}
The regularity of this auxiliary problem shows that $\|w\|_{2} \leq C \|g\|_{0}$ for any $w\in H_{0}^{1}(\Omega)\cap H^2(\Omega)$.

Taking $v = g = u_h-u_{I,Super}$ in (\ref{eq:dualproblem}), by the orthogonality of the FVE scheme $a_{h}(u-u_{h}, v_{h}) = 0$ for any $v_{h}\in V_{h}$, we have
\begin{align}  \label{eq:uh_hI_L2}
\|u_h - u_{I,Super}\|_{0}^2 =& \sum_{K\in\mathcal{T}_{h}}   a^{K}(u_h - u_{I,Super}, w-\Pi_{K}^{1}w)   \nonumber \\
        &  +  \sum_{K\in\mathcal{T}_{h}}   a^{K}(u - u_{I,Super}, \Pi_{K}^{1}w)     \nonumber \\
        &  +  \sum_{K\in\mathcal{T}_{h}}   \Big( a_{h}^{K}(u-u_h, \Pi_h^{k,*} \Pi_{K}^{1}w)-a^{K}(u - u_h, \Pi_{K}^{1}w) \Big).
\end{align}

The estimate of the first part of (\ref{eq:uh_hI_L2}) follows from (\ref{eq:SuperconvS}) that
\begin{align*}
|\sum_{K\in\mathcal{T}_{h}}   a^{K}(u_h - u_{I,Super}, w-\Pi_{K}^{1}w)| \lesssim  h^{k+2} \|u\|_{k+2}\|w\|_{2}.
\end{align*}
The second part of (\ref{eq:uh_hI_L2}) can be estimated by an argument similar to Part II of Theorem~\ref{thm:Ultra-super_H1 2D}, and the third part of (\ref{eq:uh_hI_L2}) is estimated by Lemma~\ref{lem:Estimation_for_E2} ($r=k$). This completes the proof of Theorem~\ref{thm:super_L2_2D}.
\end{proof}

\section{Numerical experiments}
\label{sec:Numerical experiments}
In this section, we consider three coefficient patterns for (\ref{eq:BVP}), listed in Table~\ref{tab:equations}:

$\bullet$ BVP-D: pure unit diffusion;

$\bullet$ BVP-DR: diagonal diffusion-reaction with variable coefficients

(used to test derivative ultraconvergence);

$\bullet$ BVP-DQR: full-tensor convection-diffusion-reaction with variable coefficients

(used to test derivative and function-value superconvergence).

The source term $f$ is chosen such that $u(x,y) = \sin(\pi x)\sin(2\pi y)e^{x - 0.5 + y^2}$.
Table~\ref{tab:FVEschemes} gives the FVE-$k$-$r$ schemes satisfying the tensorial $k$-$r$-order orthogonality condition and are employed in the computations.

\begin{table}[htbp!]
  \caption{The BVP equations}\label{tab:equations}
  \resizebox{\textwidth}{!}
  {
  \begin{tabular}{ l  l l  l  }
    1. BVP-D&     $\mathbb{D} = I_{2\times 2}$    &$\mathbb{Q} = 0$   &  $\boldsymbol{r} = 0$    \\
    2. BVP-DR&    $\mathbb{D} = \mathrm{diag}( ye^{x}+1,\,  xe^{y}+1)$
                                & $\mathbb{Q} = 0$       & $\boldsymbol{r} = (x+1)(y+1)$    \\
    3. BVP-DQR&   $\mathbb{D} = \left(
                 \begin{array}{cc}
                   ye^{x}+1   & xy \\
                   xy         & xe^{y}+1 \\
                 \end{array}
               \right)$
                        & $\mathbb{Q} = (\cos x,\cos y)^{T}$
                        & $\boldsymbol{r} = (x+1)(y+1)$
  \end{tabular}
  }
\end{table}
\begin{table}[htbp!]
\renewcommand\arraystretch{1.5}
\centering
  \caption{The FVE-$k$-$r$ schemes}\label{tab:FVEschemes}
  \resizebox{\textwidth}{!}
  {
  \begin{tabular}{ c| c | c | c |c  }
  \toprule
  $k$ & $r$ & {\em Schemes} & Dual strategies  & Interpolation nodes \\
  \hline
  \multirow{3}{*}[-2.0ex]{3}  &  2  & FVE-3-2  & \makecell[c]{$\boldsymbol{\alpha}^{x}\approx(-0.6406,-0.0748,0.6255)$ \\
                                                             $\boldsymbol{\alpha}^{y}\approx(-0.7622,-0.2073,0.6577)$}
                                              & \makecell[c]{ $(-1/5,7/50)$ \\
                                                              $(-1/2,1/5)$}\\
  \cline{2-5}
                      &  3  & FVE-3-3  & \makecell[c]{$\boldsymbol{\alpha}^{x}\approx(-0.8563,-0.1534,0.7243)$ \\
                                                             $\boldsymbol{\alpha}^{y}\approx(-0.9380,-0.2435,0.7011)$}
                                              & \makecell[c]{ $(-3/5,\approx 0.3301)$ \\
                                                              $(-5/7,\approx 0.2744)$}\\
  \cline{2-5}
                      &  4  & \makecell[c]{FVE-3-4 }
                            &  $\boldsymbol{\alpha}^{x}=\boldsymbol{\alpha}^{y}=(-\sqrt{3/5},0,\sqrt{3/5})$
                                                & Gaussian-Duality\\
  \hline
  \multirow{3}{*}[-2.0ex]{4}  &  3  & FVE-4-3  & \makecell[c]{$\boldsymbol{\alpha}^{x}\approx(-0.9156,-0.2698,0.5678,0.8838)$ \\
                                                             $\boldsymbol{\alpha}^{y}\approx(-0.9020,-0.3187,0.4628,0.8990)$}
                                              & \makecell[c]{ $(-7/10,1/5,4/5)$ \\
                                                              $(-7/10,1/10,3/4)$}\\
  \cline{2-5}
                      &  4  & FVE-4-4  & \makecell[c]{$\boldsymbol{\alpha}^{x}\approx(-0.9579,-0.4479,0.3699,0.9093)$ \\
                                                             $\boldsymbol{\alpha}^{y}\approx(-0.8598,-0.2885,0.4744,0.9452)$}
                                              & \makecell[c]{ $(-4/5,-1/25,\approx 0.7270)$ \\
                                                              $(-16/25,1/10,\approx 0.7960)$}\\
  \cline{2-5}
                      &  6  & \makecell[c]{FVE-4-6 }
                            &  $\boldsymbol{\alpha}^{x}=\boldsymbol{\alpha}^{y}=(-\sqrt{  \frac{15+2\sqrt{30}}{35} },  -\sqrt{  \frac{15-2\sqrt{30}}{35} },\sqrt{  \frac{15-2\sqrt{30}}{35} },\sqrt{  \frac{15+2\sqrt{30}}{35} })$
                                       & Gaussian-Duality\\
  \bottomrule
  \end{tabular}
  }
  \begin{tablenotes}
      \footnotesize
      \item[1] The decimals after $"\approx"$ are the divisors which retain four significant digits.
  \end{tablenotes}
\end{table}

We test the following discrete norms for $e_h := u - u_h$:
\begin{align*}
\|e_h\|_{H^{1},x}^{\mathrm{S}}
&= \left( \sum_{K \in \mathcal{T}_{h}} h_i^x \int_{y_{j-1}}^{y_j} \sum_{s=1}^{k}
    \left(\frac{\partial e_h}{\partial x}(\alpha^{x}_{s,K}, y)\right)^2 dy \right)^{1/2}, \\
\|e_h\|_{L^{2}}^{\mathrm{S}}
&= \left( \sum_{K \in \mathcal{T}_{h}} \frac{|K|}{(k+1)^2}
    \sum_{x \in \mathbb{P}_{x,K}^{\mathrm{S}}} \sum_{y \in \mathbb{P}_{y,K}^{\mathrm{S}}} e_h(x,y)^2 \right)^{1/2}, \\
\|e_h\|_{H^{1},x}^{\mathrm{U}}
&= \left( \sum_{K \in \mathcal{T}_{h}} \frac{|K|}{k(k+1)}
    \sum_{s \in \mathbb{Z}_{k}} \sum_{y \in \mathbb{P}_{y,K}^{\mathrm{S}}}
    \left(\frac{\partial e_h}{\partial x}(\alpha^{x}_{s,K},y)\right)^2 \right)^{1/2},
\end{align*}
which quantify derivative superconvergence, function-value superconvergence, and derivative ultraconvergence, respectively. Here, we only present results for the $x$-derivative, as the $y$-derivative results are identical by symmetry.

\begin{table}[htbp!]
\centering
  \caption{Derivative ultraconvergence error of Example~\ref{exam:UltraH1_exam}}\label{tab:Ultra_k}
  \resizebox{\textwidth}{!}
  {
  \begin{tabular}{ c | c c | c c |c c | c c }
  \toprule
\multicolumn{1}{c|}{BVP}    & \multicolumn{2}{c|}{ BVP-DR } & \multicolumn{6}{c}{ BVP-D } \\
  \toprule
Scheme       & \multicolumn{2}{c|}{ FVE-3-3 } & \multicolumn{2}{c|}{ FVE-3-2 }
       & \multicolumn{2}{c|}{ FVE3-4 } & \multicolumn{2}{c}{ FE-3 } \\
  \hline
  $h$  & $ \|u-u_{h}\|_{H^{1},x}^{\mathrm{U}} $ & Order & $ \|u-u_{h}\|_{H^{1},x}^{\mathrm{U}} $ & Order
       & $ \|u-u_{h}\|_{H^{1},x}^{\mathrm{U}} $ & Order & $ \|u-u_{h}\|_{H^{1},x}^{\mathrm{U}} $ & Order\\
  \hline
  1/12 & 2.9363E-06 & $\setminus$ & 2.9634E-05 & $\setminus$ & 1.1170E-06 & $\setminus$ & 3.0922E-06 & $\setminus$ \\
  1/16 & 6.9806E-07 & 4.9937 & 9.4196E-06 & 3.9840 & 2.6538E-07 & 4.9957 &  9.3570E-07 & 4.1551 \\
  1/20 & 2.2896E-07 & 4.9956 & 3.8683E-06 & 3.9884 & 8.7028E-08 & 4.9965 &  3.7495E-07 & 4.0983 \\
  1/24 & 9.2071E-08 & 4.9967 & 1.8686E-06 & 3.9909 & 3.4993E-08 & 4.9971 &  1.7861E-07 & 4.0673 \\
\toprule
Scheme       & \multicolumn{2}{c|}{ FVE-4-4 } & \multicolumn{2}{c|}{ FVE-4-3 }
       & \multicolumn{2}{c|}{ FVE4-6 } & \multicolumn{2}{c}{ FE-4 } \\
  \hline
  $h$  & $ \|u-u_{h}\|_{H^{1},x}^{\mathrm{U}} $ & Order & $ \|u-u_{h}\|_{H^{1},x}^{\mathrm{U}} $ & Order
       & $ \|u-u_{h}\|_{H^{1},x}^{\mathrm{U}} $ & Order & $ \|u-u_{h}\|_{H^{1},x}^{\mathrm{U}} $ & Order\\
  \hline
  1/8  & 3.0479e-07 & $\setminus$ & 3.7555e-06 & $\setminus$ & 1.0251e-07 & $\setminus$ & 3.3702e-07 & $\setminus$\\
  1/12 & 2.6831e-08 & 5.9933 & 4.8851e-07 & 5.0303 & 9.1740e-09 & 5.9527 & 4.1182e-08 & 5.1845\\
  1/16 & 4.7817e-09 & 5.9954 & 1.1519e-07 & 5.0220 & 1.6476e-09 & 5.9685 & 9.4836e-09 & 5.1044\\
  1/20 & 1.2659e-09 & 5.9557 & 3.7590e-08 & 5.0186 & 4.4636e-10 & 5.8526 & 3.0657e-09 & 5.0608\\
   \bottomrule
  \end{tabular}
  }
\end{table}

\begin{example}[Ultraconvergence of derivatives]
\label{exam:UltraH1_exam}

Table~\ref{tab:Ultra_k} ($k=3,4$) reports the discrete derivative errors $\|e_h\|_{H^{1},x}^{\mathrm{U}}$ for the FVE-$k$-$r$ schemes and the standard bi-$k$ finite element scheme (FE-$k$). The data reveal the following.

1. When the diffusion $\mathbb{D}$ is diagonal and convection vanishes, FVE schemes satisfying the tensorial $k$-$k$-order orthogonality condition attain $(k+2)$-order derivative ultraconvergence; this is
confirmed for FVE-3-3 and FVE-4-4 on BVP-DR, and for FVE-3-4 and FVE-4-6 on BVP-D.

2. Schemes that violates the tensorial $k$-$k$-order orthogonality condition (FVE-3-2 and FVE-4-3) fail to achieve $(k+2)$-order derivative ultraconvergence, even for the simplest BVP-D case.

3. Enhancing higher-order orthogonality ($r > k$) does not improve derivative ultraconvergence beyond $(k+2)$-order (FVE-3-4 and FVE-4-6 on BVP-D), in contrast to the one-dimensional case where higher-order orthogonality yields improved convergence.

4. Standard bi-$k$-order finite element schemes (FE-3 and FE-4) exhibit no natural derivative ultraconvergence, even for the simplest BVP-D case.
\end{example}

\begin{table}[htbp!]
\centering
  \caption{The superconvergence results of Example~\ref{exam:SuperH1_exam}}\label{tab:SuperH1_exam}
  \resizebox{\textwidth}{!}
  {
  \begin{tabular}{ c | c c | c c |c c | c c }
  \hline
  \toprule
       & \multicolumn{2}{c|}{ FVE-3-2 } & \multicolumn{2}{c|}{ FVE-3-3 }
       & \multicolumn{2}{c|}{ FVE-4-3 } & \multicolumn{2}{c}{ FVE-4-4 } \\
  \hline
  $h$  & $ \|u-u_{h}\|_{ H^{1},x}^{\mathrm{S} }$ & Order & $ \|u-u_{h}\|_{ L^{2}}^{\mathrm{S} } $ & Order
       & $ \|u-u_{h}\|_{ H^{1},x}^{\mathrm{S} }$ & Order & $ \|u-u_{h}\|_{ L^{2}}^{\mathrm{S} } $ & Order\\
  \hline
  1/12 & 1.6249E-04 & $\setminus$ & 1.1808E-06 & $\setminus$ & 4.2542E-06 & $\setminus$ & 1.2039E-08 & $\setminus$\\
  1/16 & 5.1237E-05 & 4.0119 & 2.8391E-07 & 4.9545 & 1.0062E-06 & 5.0116 & 2.1846E-09 & 5.9328\\
  1/20 & 2.0937E-05 & 4.0106 & 9.3771E-08 & 4.9646 & 3.2908E-07 & 5.0085 & 5.7937E-10 & 5.9479\\
  1/24 & 1.0080E-05 & 4.0093 & 3.7885E-08 & 4.9708 & 1.3210E-07 & 5.0063 & 1.9586E-10 & 5.9485\\
   \bottomrule
  \end{tabular}
  }
\end{table}
\begin{example}[Superconvergence of derivatives and function values]\label{exam:SuperH1_exam}
Table~\ref{tab:SuperH1_exam} demonstrates $(k+1)$-order derivative superconvergence for FVE-$k$-$(k-1)$ schemes and $(k+2)$-order function-value superconvergence for FVE-$k$-$k$ schemes on BVP-DQR, consistent with our theoretical predictions.
\end{example}

\section{Conclusion}
\label{sec:Conclusion}
In this paper, we introduce a novel family of $(k+2)$-order derivative ultraconvergence structures for the finite volume element method on rectangular meshes. A key feature is the freedom to place--possibly asymmetric--ultraconvergence points at will, whereas standard bi-$k$ finite element schemes possess no natural $(k+2)$-order derivative ultraconvergence.
Theoretically, we design the asymmetric-enabled M-decomposition for ultraconvergence (AMD-Ultra) to build a superclose function that links the exact and numerical solutions at the chosen points.
Companion asymmetric-enabled structures for one-order-higher derivative and function-value superconvergence are also developed, with the corresponding AMD-Super framework established in parallel.
Numerical experiments validate our theoretical findings and demonstrate the practical utility of the proposed frameworks.

\end{document}